\documentclass[12pt]{article}
\usepackage{amssymb,amsmath, amsthm}
\usepackage{color,xcolor}
\setlength{\headsep}{-5pt}
\setlength{\headheight}{-5pt}
\setlength{\textheight}{9in}
\setlength{\oddsidemargin}{5pt}
\setlength{\textwidth}{435pt}
\setlength{\parskip}{8pt plus 2pt minus 1pt}
\setlength{\leftmargini}{5pt}
\newcommand{\eb}{\begin{equation}}
\newcommand{\ee}{\end{equation}}
\newcommand{\ebx}{\begin{equation*}}
\newcommand{\eex}{\end{equation*}}
\newtheorem{lemma}{Lemma}[section]
\newtheorem{proposition}[lemma]{Proposition}
\newtheorem{theorem}[lemma]{Theorem}

\newtheorem{definition}[lemma]{Definition}

\allowdisplaybreaks

\makeatletter
\renewcommand*\env@matrix[1][*\c@MaxMatrixCols c]{%
  \hskip -\arraycolsep
  \let\@ifnextchar\new@ifnextchar
  \array{#1}}
\makeatother

\begin{document}

\title{Local orthogonal maps and rigidity of holomorphic mappings between real hyperquadrics}

\author{Yun Gao\footnote{School of Mathematical Sciences, Shanghai Jiaotong University, Shanghai, People's Republic of China. \textbf{Email:}~gaoyunmath@sjtu.edu.cn}, Sui-Chung Ng\footnote{School of Mathematical Sciences, Shanghai Key Laboratory of PMMP, East China Normal University, Shanghai, People's Republic of China. \textbf{Email:}~scng@math.ecnu.edu.cn}}

\maketitle

\begin{abstract}
We introduced a new coordinate-free approach to study the Cauchy-Riemann (CR) maps between the real hyperquadrics in the complex projective space. The central theme is based on a notion of orthogonality on the projective space induced by the Hermitian structure defining the hyperquadrics. There are various kinds of special linear subspaces associated to this orthogonality which are well respected by the relevant CR maps and this is where the rigidities come from. Our method allows us to generalize a number of well-known rigidity theorems for the CR mappings between real hyperquadrics with much simpler arguments.
\end{abstract}


\section{Introduction}

The real hyperquadrics in the complex projective space plays a special role in the field of CR (Cauchy-Riemann) Geometry. On the one hand, they are the among simplest CR-manifolds (e.g. the boundaries of the unit balls) which can serve as model spaces on which one can formulate or verify various statements or theories. On the other hand, to CR geometry they play a role like what the Euclidean spaces to Riemannian Geometry and what the projective spaces to Algebraic Geometry. For instance, there is the well-known problem about which CR manifolds can be embedded into these hyperquadrics. We refer the reader to~\cite{BEH, da, Fo, HJ, Za1} (and the references therein) for the related works in this area.

The traditional approach to the study is based on Chern-Moser's normal form theory, in which the central theme is that one can choose good coordinates such that the CR manifolds and the relevant holomorphic maps take certain normal forms. This is a powerful method which has been used to solve many problems but it usually requires formidable calculation. Nevertheless, we observed that when the CR manifolds being concerned are real hyperquadrics, there are a certain type of orthogonality and a number of related objects such as null spaces and orthogonal complements which interact well with the CR maps. Our approach to the study of real hyperquadrics is to work on these geometric objects directly. While our method might not be able to produce as much fine detail of the relevant maps as the traditional normal form theory, it has the advantages of being coordinate free and geometrically more transparent. It is especially suited for obtaining rigidities and studying general behaviors. In addition, the arguments involved are much easier. 

Let $r,s,t\in\mathbb N$ and denote by $\mathbb C^{r,s,t}$ be the Euclidean space equipped with the standard (possibly degenerate) Hermitian bilinear form whose eigenvalues are $+1$, $-1$ and $0$ with multiplicities $r$, $s$ and $t$ respectively. Consider its projectivization $\mathbb P^{r,s,t}:=\mathbb P\mathbb C^{r,s,t}$. The notions of positive, negative and null points are well defined on $\mathbb P^{r,s,t}$. Among these, the set of positive points, denoted by $\mathbb B^{r,s,t}\subset\mathbb P^{r,s,t}$, is called a \textit{generalized ball} and its boundary $\partial\mathbb B^{r,s,t}$, which consists of the set of null points, is a CR hypersurface in $\mathbb P^{r,s,t}$ defined by a real quadratic equation. The name ``generalized balls" comes from the fact that $\mathbb B^{1,s,0}$ is just the ordinary $s$-dimensional complex unit ball embedded in $\mathbb P^s$. 

Suppose $U\subset\mathbb P^{r,s,t}$ is a connected open set, $U\cap\partial\mathbb B^{r,s,t}\neq\varnothing$ and $f:U\rightarrow\mathbb P^{r',s',t'}$ is a holomorphic map such that $f(U\cap\partial\mathbb B^{r,s,t})\subset\partial\mathbb B^{r',s',t'}$. A classical problem in CR geometry is under what conditions $f$ is rigid in the sense that it comes from some ``standard" map between the projective spaces, e.g. from a linear map. Our starting point is the observation that $f$ preserves the orthogonality induced by the Hermitian bilinear form on $\mathbb C^{r,s,t}$ which remains defined on $\mathbb P^{r,s,t}$ (Proposition~\ref{equiv1}).  This leads to the definition of \textit{local orthogonal maps} (see Definition~\ref{orthogonal def} and the remark there regarding the relation to CR maps between hyperquadrics). In what follows, we call $f$ \textit{null} if $f(U)\subset\partial\mathbb B^{r',s',t'}$. This is a kind of triviality in the current setting analogous to constant maps. We call $f$ \textit{quasi-linear} (resp. \textit{quasi-standard}) if it is in some sense a ``direct sum" of two parts, of which one comes from a linear map (resp. linear isometry) from $\mathbb C^{r,s,t}$ to $\mathbb C^{r',s',t'}$ and the other one is null. The precise definitions are given in Definitions~\ref{standard def} and~\ref{quasi def} and for the moment we use a simple example of quasi-standard maps for illustration: 

\noindent\textbf{Example.} \textit{We split the coordinates of $\mathbb C^{r,s}$ as $(z^+,z^-)$ corresponding to the positive and negative eigenvalues of the standard Hermitian form. Let $\phi$, $\psi$ and $\chi$ be homogeneous polynomials function on $\mathbb C^{r,s}$ such that $\deg(\psi)=\deg(\chi)=\deg(\phi)+1$. Then a rational map from $\mathbb P^{r,s}$ to $\mathbb P^{r+1,s+1,1}$ of the form $[z^+,z^-]\mapsto [\phi z^+, \psi,\phi z^-,\psi,\chi]$ is a quasi-standard orthogonal map, in which $[\phi z^+,0,\phi z^-,0,0]\sim [z^+,0,z^-,0,0]$ is the standard (i.e. isometry) part and $[0,\psi,0,\psi,\chi]$ is the null part.}

We are now ready to summarize our main results for local orthogonal maps:

\begin{theorem}\label{intro thm 1}
Let $U\subset\mathbb P^{r,s,t}$ be a connected open set such that $U\cap\partial\mathbb B^{r,s,t}\neq\varnothing$ and $f:U\rightarrow\mathbb P^{r',s',t'}$ be a local orthogonal map. Then $f$ is either null or quasi-linear if one of the conditions below is satisfied:
$$
\begingroup
\renewcommand*{\arraystretch}{1.5}
\begin{matrix}[ccccl]
(i)& r,s\geq 2& \textrm{and} &\min\{r',s'\}\leq\min\{r,s\};&{\rm (Theorem~\ref{same})}\\
(ii)&t=0& \textrm{and} &\min\{r',s'\}\leq2\min\{r,s\}-2;&{\rm (Theorem~\ref{main})}\\
(iii)&t=0& \textrm{and} & r'+s'\leq 2\dim(\mathbb P^{r,s})-1. &{\rm (Theorem~\ref{double dim thm})}
\end{matrix}
\endgroup
$$
In addition, $f$ is quasi-standard if it maps a positive point to a positive point under any of the conditions above, or
$$
\begin{matrix}[ccccl]
(iv)& r,s\geq 2,\,\, r=r' &\textrm{and}& f(U\cap\mathbb B^{r,s,t})\subset\mathbb B^{r',s',t'}.&{(\rm Theorem~\ref{same2})}
\end{matrix}
$$
\end{theorem}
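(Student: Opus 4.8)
Since the four numbered cases fall into genuinely different numerical regimes, my plan is to isolate a single geometric mechanism and then show how each of the hypotheses (i)--(iv) feeds it; the constituent statements are recorded separately as Theorems~\ref{same},~\ref{main},~\ref{double dim thm} and~\ref{same2}. The one tool available from the outset is Proposition~\ref{equiv1}, which tells us that a local orthogonal map $f$ respects the orthogonality $\perp$ coming from the Hermitian form. Reading this pointwise, for every $p\in U$ one has $f(p^\perp\cap U)\subset f(p)^\perp$, where $p^\perp$ is the projective hyperplane orthogonal to $p$. The first step is to promote this pointwise compatibility to a statement about linear subspaces: I would show that $f$ transports the maximal \emph{null} (totally isotropic) projective subspaces lying in $\partial\mathbb{B}^{r,s,t}$, together with the flags $p\subset p^\perp$ attached to a null point, to subspaces of the same type in the target. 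Because these subspaces are cut out purely by orthogonality relations, their transport is forced by $f$ being orthogonal; this is exactly the device that extracts rigid linear-algebraic data out of an a priori merely holomorphic map.

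The second step is to turn this incidence data into an explicit normal form for $f$. The guiding principle is that a map respecting a (possibly degenerate) orthogonal pairing together with enough of its isotropic subspaces must be the projectivization of a linear map on the underlying vector space, up to a correction supported in the null directions. Concretely, I would aim to produce a linear map $L:\mathbb{C}^{r,s,t}\to\mathbb{C}^{r',s',t'}$ whose projectivization captures the non-null part of $f$, and then write $f$ as a direct sum of $[L]$ with a residual map into $\partial\mathbb{B}^{r',s',t'}$; this is precisely the quasi-linear decomposition of Definitions~\ref{standard def} and~\ref{quasi def}. The null/quasi-linear dichotomy then reduces to showing that the residual part cannot contaminate the linear part: either $L$ degenerates entirely (so $f$ is null) or it survives as an honest linear piece (so $f$ is quasi-linear).

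The numerical hypotheses are exactly what make this splitting go through, and I expect the dimension bookkeeping on isotropic subspaces to be the main obstacle. In case (i), the bound $\min\{r',s'\}\le\min\{r,s\}$ with $r,s\ge2$ prevents the target signature from exceeding the source signature, so a null subspace has no room to enlarge under $f$ and its transport is rigid. In cases (ii) and (iii) we have $t=0$, so the source is nondegenerate, and the inequalities $\min\{r',s'\}\le2\min\{r,s\}-2$ and $r'+s'\le2\dim(\mathbb{P}^{r,s})-1$ are gap-type bounds: they constrain the target dimension to roughly twice the source dimension, which forces the rational extension of $f$ to have low degree and excludes the intermediate behaviour that would otherwise break the decomposition. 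The crux in each case is a sharp count comparing the dimensions of the images of a family of null subspaces (or of the complements $p^\perp$) with the dimension available in the target; pushing this count far enough to rule out degree growth is where the genuine difficulty lies.

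Finally, for the quasi-standard upgrade I would examine the linear part $L$ once quasi-linearity is known. Sending a positive point to a positive point, or in case (iv) sending $\mathbb{B}^{r,s,t}$ into $\mathbb{B}^{r',s',t'}$ with $r=r'$, pins down the signature behaviour of $L$: the hypothesis forces $L$ to preserve the Hermitian form up to a positive scalar on the relevant subspace, i.e.\ to become a linear isometry after normalization, which is exactly the standard part of Definition~\ref{standard def}. The equality $r=r'$ in (iv) is what forbids a strict drop in the positive signature, so that the ball-into-ball condition promotes the isometry from the nondegenerate factor to the whole of $\mathbb{C}^{r,s,t}$.
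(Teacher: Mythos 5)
Your outline captures the paper's guiding philosophy --- orthogonality forces $f$ to respect null subspaces and orthogonal complements, dimension counts in the target then rigidify the map, and a sign condition upgrades linear to standard via elementary linear algebra (this last step is exactly Lemma~\ref{linear}). But as a proof it has a genuine gap at its centre: knowing that $f$ transports \emph{null} subspaces (Proposition~\ref{null}) says nothing, by itself, about what $f$ does to the generic, non-null linear subspaces, and it is control of the latter that every case of the theorem actually needs. The paper bridges this with Proposition~\ref{boundary}: the null $(\min\{r,s\}-1)$-planes are parametrized by the Shilov boundary $S(\Omega_{r,s})\times M_{r,t}$ inside the Grassmannian (Proposition~\ref{parametrization}), the condition ``image contained in an $(r'+t'-1)$-plane'' is cut out by holomorphic functions on the Grassmannian, and since the Shilov boundary is a distinguished boundary these functions vanish identically --- hence \emph{all} $(\min\{r,s\}-1)$-planes, not just the null ones, are mapped into $(\min\{r',s'\}+t'-1)$-planes. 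Your proposal never supplies this propagation step, and without it the ``sharp count'' you describe has nothing to count.

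The second missing ingredient is the device that converts ``maps $\ell$-planes to $\ell'$-planes'' into rigidity, namely Proposition~\ref{faran type}: if $\ell'\leq\ell$ the map is linear or its image degenerates into an $\ell'$-plane, and if $\ell\leq\ell'\leq 2\ell-1$ the plane-to-plane property propagates to higher-dimensional planes (a hyperplane restriction theorem). This, not a bound on the degree of the rational extension, is what the numerical hypotheses feed: case (i) gives $\ell'\leq\ell$ directly; cases (ii) and (iii) give $\ell'\leq 2\ell-1$ for the planes of Proposition~\ref{boundary}, respectively for orthogonal complements of lines, followed by an induction on the projections $\pi_\Phi$ and $\pi_{\Phi^\perp}$. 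Relatedly, your plan to produce a linear map $L$ first and then split off a null residue inverts the logic of the actual argument: linearity is the \emph{output} of Proposition~\ref{faran type}, and the quasi-linear splitting only appears afterwards, through the standard projections $\mathbb P^{r',s',t'}\dasharrow\mathbb P^{r',s'}$ and onto $\Phi$, $\Phi^\perp$. Finally, for (iv) the paper does not argue directly on signatures; it slices by orthogonal complements of $r-s$ positive points to reduce to an $(s,s,t)$-subspace where case (i) applies, then uses openness of positive $(s-1)$-planes in the Grassmannian to recover the plane-to-plane property for $F$ itself (Theorem~\ref{same2}). So the proposal is a reasonable roadmap, but it does not yet contain the two lemmas on which the whole proof turns.
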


From Theorem~\ref{intro thm 1}, we can deduce and generalize a number of well-known rigidity theorems for the holomorphic maps between real hyperquadrics, including those of Baouendi-Huang~\cite{BH} (from $(i)$ and $(iv)$); Baouendi-Ebenfelt-Huang~\cite{BEH} (from $(ii)$); Faran~\cite{faran} (from $(iii)$); and Xiao-Yuan~\cite{XY} (from $(iii)$). The detail of the relation of these results to ours is in Section~\ref{proper maps}.

Here we briefly explain the basic philosophy behind our method, which is essentially the interplay between orthogonality and linear subspaces. First of all, orthogonality can be used to obtain important information about the images of the linear subspaces. For one thing, the maximal null spaces are easily seen to be respected by an orthogonal map $f$. Furthermore, since maximal null spaces constitute in some sense ``a distinguished subset" in the Grassmannian, we can then bound the dimension of the linear spans of the images of \textit{all} subspaces of the same dimension. For another thing, if we take a pair of orthogonal subspaces, then the linear spans of their images under an orthogonal map $f$ remain orthogonal. However, the sum of the dimensions of two orthogonal subspaces is bounded for a given target space and so this will again control the linear spans of the images of linear subspaces. Once such dimension estimates are obtained, by making appropriate hypotheses on the parameters defining the Hermitian forms and with the help of a technical proposition relating the images of the  subspaces in different dimensions (Proposition~\ref{faran type}), we can deduce the desired rigidity.


\section{Definitions and basic properties}\label{orthogonal section}

Let $r,s,t\in\mathbb N$ and $n:=r+s+t>0$. Define the (possibly degenerate) indefinite inner product of signature $(r;s;t)$ on $\mathbb C^{n}$:
$$
	\langle z, w\rangle_{r,s,t}
	=z_1\bar w_1+\cdots+z_r\bar w_r-z_{r+1}\bar w_{r+1}-\cdots - z_{r+s}\bar w_{r+s},
$$
where $z=(z_1,\ldots,z_{n})$ and $w=(w_1,\ldots,w_{n})$. We also define the indefinite norm  $\|z\|^2_{r,s,t}=\langle z, z\rangle_{r,s,t}$. Then, for any $z\in \mathbb C^{r,s,t}$, we call it a \textit{positive point} if $\|z\|^2_{r,s,t}>0$; a \textit{negative point} if  $\|z\|^2_{r,s,t}<0$ and a \textit{null point} if  $\|z\|^2_{r,s,t}=0$. If $\langle z, w\rangle_{r,s,t}=0$, we say that $z$ is orthogonal to $w$ and write $z\perp w$. In addition, the \textit{orthogonal complement} of $z$ is defined as
$$z^{\perp}=\{w\in  \mathbb C^{r,s,t} \mid \langle z, w\rangle_{r,s,t}=0\}.$$

We denote by $\mathbb C^{r,s,t}$ the $\mathbb C^{n}$ with the Hermitian inner product defined above and by 
$\mathbb P^{r,s,t}:=\mathbb P\mathbb C^{r,s,t}$ its projectivization.  We write $\mathbb C^{r,s}$ and $\mathbb P^{r,s}$ instead of $\mathbb C^{r,s,0}$ and $\mathbb P^{r,s,0}$.
A biholomorphism on $\mathbb P^{r,s,t}$ induced by a linear isometry of $\mathbb C^{r,s,t}$ is said to be an automorphism of  $\mathbb P^{r,s,t}$. 

Although the norm $\|\cdot\|^2_{r,s,t}$ of course does not descend to $\mathbb P^{r,s,t}$, the positivity, negativity or nullity of a line (1-dimensional subspace) remains well defined and thus we can talk about positive points, negative points and null points on  $\mathbb P^{r,s,t}$. Furthermore, the orthogonality of two points on $\mathbb P^{r,s,t}$ and hence the notion of orthogonal complement also make sense on $\mathbb P^{r,s,t}$.

More generally, let $V$ be a complex vector space equipped with a Hermitian inner product (possibly degenerate or indefinite) $H_V$ of signature $(r;s;t)$, where $\dim(V)=r+s+t$. Let $\mathbb PV$ be its projectivization. The notion of positivity, negativity, nullity and orthogonality can be similarly defined on $\mathbb PV$. In addition, any linear isometry $F:\mathbb C^{r,s,t}\rightarrow V$ induces a biholomorphic map $\tilde F:\mathbb P^{r,s,t}\rightarrow\mathbb PV$ preserving all these notions. Sufficient for our purpose, we  can simply identify any such projective space $\mathbb PV$ with $\mathbb P^{r,s,t}$ through any such biholomorphism and we write $\mathbb PV\cong\mathbb P^{r,s,t}$ for such identification.

Now let $H$ be a complex linear subspace in $\mathbb C^{r,s,t}$ and the restriction of  $\langle\cdot,\cdot\rangle_{r,s,t}$ on $H$ has the signature $(a; b; c)$. Obviously, we have $0\le a\le r$, $0\le b\le s$, $0\le c\le \min\{r-a,s-b\}+t$ and $a+b+c=\dim(H)$.  Then $\mathbb PH \cong \mathbb P^{a,b,c}$. We call $\mathbb P H$ an\textit{ $(a,b,c)$-subspace} of $\mathbb P^{r,s,t}$. Usually, we denote an $(a,b,c)$-subspace by $H^{a,b,c}$.

If $a=b=0$, $\mathbb P H$ is called a \textit{null space}. Similarly, it is called a \textit{positive space} (resp. \textit{negative space}) if $b=c=0$ (resp. $a=c=0$).  We will also use the terms \textit{null $k$-plane, positive $k$-plane} and \textit{negative $k$-plane} when the $\dim(\mathbb PH)=k$.
Obviously, the maximum dimension of the null spaces in $\mathbb P^{r,s,t}$ is $\min\{r,s\}+t-1$.  The null spaces with the maximal dimension are called the \textit{maximal null spaces}.

We now recall the definition of type-I irreducible bounded symmetric domain and with that we can   give some useful parametrizations for the positive, negative and null spaces in $\mathbb P^{r,s,t}$.

\begin{definition}\label{BSD}
Let $M_{r,s}$ be the set of $r\times s$ complex matrices. The type-I irreducible bounded symmetric domain $\Omega_{r,s}$ is the domain in $M_{r,s}\cong\mathbb C^{rs}$ defined by $\Omega_{r,s}=\{A\in M_{r,s}: I-AA^H>0 \}$, where $A^H$ denotes the Hermitian transpose of $A$. 
\end{definition}

In what follows, for a point $[z]=[z_1,\ldots,z_n ]\in \mathbb P^{r,s,t}$, we split the homogeneous coordinates as $[z]=[z^+,z^-,z^0]$,  where $z^+=[z_1,\cdots, z_r]$, $z^-=[z_{r+1},\cdots, z_{r+s}]$, $z^0=[z_{r+s+1},\cdots, z_n]$. If $t=0$, $[z]$ is split as $[z]=[z^+,z^-]$.

Let  $A\in M_{r,s}$ and $B\in M_{r,t}$. Consider the $(r-1)$-plane defined by
$$
H_{A,B}=\{[z^+,z^-,z^0]\in \mathbb P^{r,s,t} \mid z^-=z^+A\,\,\textrm{and}\,\, z^0=z^+B\}\cong \mathbb P^{r-1}\subset \mathbb P^{r,s,t}.
$$
Using the definition of $H_{A,B}$, we see that $M_{r,s} \times M_{r,t}$ can be identified naturally as an open subset  of the Grassmannian $\mathbb G(r-1, \mathbb P^{r,s,t})$  which is the set of $(r-1)$-planes in $\mathbb P^{r,s,t}$. (Note that the dimensions of $M_{r,s}\times M_{r,t}$  and  $\mathbb G(r-1, \mathbb P^{r,s,t})$ are the same.)

The $(r-1)$-plane $H_{A,B}$ is a positive subspace if and only if $\|z^+\|^2>\|z^-\|^2=\|z^+A\|^2$ for all $z^+\in\mathbb P^{r-1}$, which is in turn equivalent to $A\in \Omega_{r,s}$. 
Similarly, when $r\leq s$, we see that $H_{A,B}$ is a null $(r-1)$-plane if and only if $AA^H=I$. We know that latter equation defines precisely the Shilov boundary of $\Omega_{r,s}$ in $M_{r,s}\cong\mathbb C^{rs}$, denoted by $S(\Omega_{r,s})$. 
To summarize, we have the following (cf.~\cite{NZ}):

\begin{proposition}\label{parametrization}
There is an open embedding of $M_{r,s}\times M_{r,t}$ into $\mathbb G(r-1,\mathbb P^{r,s,t})$ such that its restriction to $\Omega_{r,s}\times M_{r,t}$ gives a parametrization of all positive $(r-1)$-planes in $\mathbb P^{r,s,t}$. In addition, when $r\leq s$, the restriction to $S(\Omega_{r,s})\times M_{r,t}$  also gives a parametrization of the null $(r-1)$-planes in $\mathbb P^{r,s,t}$.
\end{proposition}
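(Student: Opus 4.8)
The plan is to realize the assignment $(A,B)\mapsto H_{A,B}$ as the standard affine coordinate chart of the Grassmannian and then read off the positivity and nullity conditions from the norm computation already carried out above. First I would identify $\mathbb G(r-1,\mathbb P^{r,s,t})$ with the Grassmannian of $r$-dimensional linear subspaces of $\mathbb C^{n}$, and observe that $H_{A,B}$ is the projectivization of the row space of the $r\times n$ matrix $[\,I_r\mid A\mid B\,]$, i.e. the graph $\{(c,cA,cB):c\in\mathbb C^r\}$ of the linear map $\mathbb C^r\to\mathbb C^{s+t}$ determined by $[\,A\mid B\,]$. This is precisely the standard big cell of the Grassmannian, consisting of those $r$-planes $W$ on which the projection $\pi_+:\mathbb C^n\to\mathbb C^r$ onto the first $r$ coordinates restricts to an isomorphism; the assignment $W\leftrightarrow(A,B)\in M_{r,s}\times M_{r,t}$ is the usual biholomorphism onto this open cell. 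Since $\dim(M_{r,s}\times M_{r,t})=r(s+t)=r(n-r)=\dim\mathbb G(r-1,\mathbb P^{r,s,t})$, this gives the desired open embedding.

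For the positive planes, a point of $H_{A,B}$ has the form $[z^+,z^+A,z^+B]$, and since the $z^0$-block carries the zero eigenvalues its norm is $\|z^+\|^2-\|z^+A\|^2=z^+(I-AA^H)(z^+)^H$. As recorded above, positivity for all $z^+\neq 0$ is equivalent to $I-AA^H>0$, i.e. $A\in\Omega_{r,s}$, with $B$ unconstrained. To see that $\Omega_{r,s}\times M_{r,t}$ captures \emph{every} positive $(r-1)$-plane, I would show that any positive $r$-subspace $W$ lies in the big cell: if $v=(0,v^-,v^0)\in W$, then $\langle v,v\rangle=-\|v^-\|^2\le 0$, which is incompatible with the positive-definiteness of $W$ unless $v=0$; hence $W\cap\ker\pi_+=0$, so $\pi_+|_W$ is an isomorphism and $W=H_{A,B}$ for some $A\in\Omega_{r,s}$.

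For the null planes with $r\le s$, the polarization identity for Hermitian forms shows that $H_{A,B}$ is totally isotropic as soon as every one of its points is null, so the nullity condition is $\|z^+\|^2=\|z^+A\|^2$ for all $z^+$, equivalently $AA^H=I_r$ (solvable because $r\le s$), i.e. $A\in S(\Omega_{r,s})$. The converse---that a given null $(r-1)$-plane arises this way---again amounts to showing that $\pi_+|_W$ is an isomorphism, and here lies the main obstacle. Repeating the computation, a vector $v=(0,v^-,v^0)\in W$ now satisfies $\langle v,v\rangle=-\|v^-\|^2=0$ by isotropy, which forces $v^-=0$; thus $\ker\pi_+|_W=W\cap\{z^+=0\}$ lands inside the radical $R=\{z^+=z^-=0\}$ rather than being forced to vanish outright. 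Consequently $\pi_+|_W$ is an isomorphism precisely when $W$ meets $R$ trivially, which is automatic when $t=0$ and is the operative transversality condition in general; for such $W$ one concludes $W=H_{A,B}$ with $A\in S(\Omega_{r,s})$, completing the parametrization. The heart of the argument is therefore this dichotomy: for positive planes the signature alone rules out any kernel, whereas for null planes the potential kernel must be located inside the radical and controlled accordingly.
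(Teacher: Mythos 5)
Your proposal is correct and follows essentially the same route as the paper: the paper's ``proof'' is just the computation preceding the proposition, where $H_{A,B}$ is exhibited as the big affine cell of the Grassmannian and the conditions $A\in\Omega_{r,s}$ (positivity) and $AA^H=I$ (nullity, for $r\le s$) are read off from $\|z^+\|^2-\|z^+A\|^2=z^+(I-AA^H)(z^+)^H$. What you add, and the paper omits, is the surjectivity analysis: your argument that a positive $r$-dimensional subspace $W$ cannot meet $\{z^+=0\}$ (since $\langle v,v\rangle=-\|v^-\|^2\le 0$ there) cleanly shows every positive $(r-1)$-plane lies in the big cell, and your observation about the null case is a genuine point -- when $t>0$ a null $(r-1)$-plane may meet the radical $R=\{z^+=z^-=0\}$ and then is \emph{not} of the form $H_{A,B}$ (e.g.\ the special null point $[0,0,1]$ in $\mathbb P^{1,1,1}$), so the parametrization covers only those null $(r-1)$-planes transverse to $R$, which is all of them exactly when $t=0$. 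This does not damage the paper: in the one place the null statement is used (Proposition~\ref{boundary}), only the forward inclusion is needed, namely that every plane parametrized by $S(\Omega_{r,s})\times M_{r,t}$ is null, so that the vanishing functions on the Grassmannian vanish along the Shilov boundary directions. Your reading of the null half of the statement as an assertion about the planes in the big cell is the correct one.
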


Many analyses on the mapping problems between CR manifolds begin with the fact that the associated \textit{Segre varieties} are well respected by CR maps. The study of Segre varieties has a very important role in many problems like reflection principle and algebracity~\cite{Za2}. For real hyperquadrics on complex projective space, we note that their Segre varieties are just the orthogonal complements (to points) with respect to the orthogonality described previously. Motivated from this, we thus give the following definition.

\begin{definition}\label{orthogonal def}
Let $U \subset \mathbb P^{r,s,t}$ be  a connected open set containing a null point. We call a holomorphic map $F: U\rightarrow\mathbb P^{r',s',t'}$ \textbf{orthogonal} if $F(p)\perp F(q)$ for any $p, q\in U$ such that $p\perp q$; \textbf{sign-preserving} if $F$ maps positive points to positive points and negative points to negative points.
We also simply call $F$ a local orthogonal map or local sign-preserving map from $ \mathbb P^{r,s,t}$ to $\mathbb P^{r',s',t'}$ in such cases.
\end{definition}


\noindent{\bf Remark.} For the sake of convenience, in our definition we require the domain of definition $U$ of a local orthogonal map $f$ to intersect $\partial\mathbb B^{r,s,t}$ so that the orthogonality or sign-preserving condition will not become vacuous. (Otherwise, it could happen that  $p^{\perp}\cap U=\varnothing$ for any point $p\in U$.) With this requirement, Proposition~\ref{null} and Proposition~\ref{equiv1} below will imply that orthogonal maps and holomorphic maps between real hyperquadrics are the same set of mappings.
However, we remark that in principle one can define orthogonal maps more generally without requiring $U\cap\partial\mathbb B^{r,s,t}\neq\varnothing$ and some of our results (e.g. Theorem~\ref{double dim thm}) will still be valid. In this sense, a local orthogonal map is a slightly more general object.

\noindent{\bf Convention.} \textit{In what follows, if we say that $F$ maps lines to lines (resp. $k$-planes to $k'$-planes), we mean $F$ maps the intersection of any line (resp. $k$-plane) and $U$  into a line (resp. $k'$-plane).}

Local orthogonal map preserves null spaces, as demonstrated below.

\begin{proposition}\label{null}
Let $F$ be a local orthogonal map from $ \mathbb P^{r,s,t}$ to $\mathbb P^{r',s',t'}$. Then $F$ maps null spaces to null spaces (in particular, null points to null points).
\end{proposition}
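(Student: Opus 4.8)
The plan is to reduce the statement to two facts: that $F$ sends null points to null points, and that the linear span of the image of a null space is totally isotropic. The key elementary observation is that a point $p$ is null precisely when it is self-orthogonal, i.e. $p\perp p$, and that a projective subspace $\mathbb PH$ is a null space precisely when the restriction of $\langle\cdot,\cdot\rangle_{r,s,t}$ to $H$ vanishes identically, equivalently when every pair of its points (including each point with itself) is orthogonal.

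First I would treat null points. Let $p\in U$ be null, so that $p\perp p$. Working with a local holomorphic lift $\tilde F$ of $F$ near $p$, I would exploit the orthogonal complement $p^{\perp}$, which is a hyperplane containing $p$ precisely because $p$ is null. Since $U$ is open and $p\in p^{\perp}\cap U$, I can choose points $q_k\in p^{\perp}\cap U$ with $q_k\to p$; each satisfies $q_k\perp p$, hence $\langle \tilde F(q_k),\tilde F(p)\rangle_{r',s',t'}=0$ by the orthogonality of $F$. Letting $k\to\infty$ and using the continuity of $\tilde F$ yields $\langle \tilde F(p),\tilde F(p)\rangle_{r',s',t'}=0$, so $F(p)$ is null. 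Alternatively, since $p\perp p$ one may apply the defining property of an orthogonal map directly with $q=p$; the limiting argument makes this robust and avoids any concern about the diagonal.

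Next I would upgrade this to null spaces. Let $\mathbb PH$ be a null space and set $\Lambda=\mathbb PH\cap U$. By the previous step every point of $\Lambda$ maps to a null point, and for any $p,q\in\Lambda$ we have $p\perp q$, since all points of a null space are mutually orthogonal, whence $F(p)\perp F(q)$. Choosing lifts $\tilde F$, this gives $\langle \tilde F(p),\tilde F(q)\rangle_{r',s',t'}=0$ for all $p,q\in\Lambda$. Let $W\subset\mathbb C^{r',s',t'}$ be the linear span of $\{\tilde F(p):p\in\Lambda\}$. By the sesquilinearity of the Hermitian form, its restriction to $W$ vanishes identically, so $W$ is totally isotropic, i.e. of signature $(0;0;\dim W)$, and $\mathbb PW$ is a null space containing $F(\Lambda)$. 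Since $W$ automatically has finite dimension inside $\mathbb C^{r',s',t'}$, this exhibits $F$ as mapping $\mathbb PH\cap U$ into a null space, as required by the stated convention.

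The only real obstacle is the passage to self-orthogonality at a single null point, namely establishing $\langle \tilde F(p),\tilde F(p)\rangle_{r',s',t'}=0$; once this is secured, the extension to arbitrary null spaces is routine linear algebra driven entirely by the sesquilinearity of the form. The continuity argument above, which uses that a null point lies in its own orthogonal complement together with the openness of $U$, is what makes this step clean and keeps the whole proof coordinate-free.
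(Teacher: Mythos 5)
Your proposal is correct and follows essentially the same route as the paper: apply the orthogonality condition with $p=q$ at a null point to get self-orthogonality of the image, then use mutual orthogonality of all points of a null space to conclude that the linear span of the image is totally isotropic. The limiting argument via points $q_k\in p^{\perp}\cap U$ is an unnecessary (though harmless) precaution, since the definition of an orthogonal map already permits taking $p=q$, which is exactly what the paper does.
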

\begin{proof}
For any two points $\alpha, \,\beta$ in a null space in $\mathbb P^{r,s,t}$, we have $\langle \alpha, \,\alpha\rangle_{r,s,t}=\langle \beta, \,\beta\rangle_{r,s,t}=\langle \alpha, \,\beta\rangle_{r,s,t}=0$.
Since $F$ is a local orthogonal map, we get
$$\langle F(\alpha), \,F(\alpha)\rangle_{r',s',t'}=\langle F(\beta), \,
F(\beta)\rangle_{r',s',t'}=\langle F(\alpha), \,F(\beta)\rangle_{r',s',t'}=0.$$  
So the linear span of the image of a null space is a null space.
\end{proof}

There are two kinds of null points in $\mathbb P^{r,s,t}$ when $r,s,t>0$. The first kind of null points  is the null points $\alpha$ satisfying $\langle \alpha,\beta \rangle_{r,s,t}=0$, for any $\beta\in \mathbb P^{r,s,t}$. We call these null points \textit {special null points} and call other null points \textit{ordinary null points.} It is easy to see from the defining equations for special and ordinary null points that a \textit{general} null point is ordinary and for $r,s>0$, whenever an open set contains a null point, it must contain an ordinary null point.



Both orthogonal maps and sign-preserving maps map null points to null points. Conversely, the following proposition in particular implies that a sign-preserving map is locally an orthogonal map. 

\begin{proposition}\label{equiv1}
Let $r,s>0$ and $F:U\subset\mathbb P^{r,s,t}\rightarrow\mathbb P^{r',s',t'}$ be a holomorphic map, where $U$ is an open set containing a null point. If $F$ maps null points to null points, then there exists an open set $V\subset U$, such that $F:V\rightarrow\mathbb P^{r',s',t'}$ is an orthogonal map. 
\end{proposition}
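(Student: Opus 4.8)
The plan is to read the hypothesis ``$F$ maps null points to null points'' as the statement that the holomorphic map $F$ sends the real hypersurface $\partial\mathbb B^{r,s,t}$ into $\partial\mathbb B^{r',s',t'}$, and then to run the classical Segre-variety reflection argument, which in the present inner-product language amounts to \emph{complexifying} the defining identity. First I would localize at an \emph{ordinary} null point. Since $r,s>0$, by the discussion following Proposition~\ref{null} the set $U$ contains an ordinary null point $p_0$, and near $p_0$ the hypersurface $M:=\partial\mathbb B^{r,s,t}$ is a smooth real-analytic hypersurface: the differential of $\|z\|^2_{r,s,t}$ vanishes exactly on the radical of the form (the special null points) and is therefore nonzero at $p_0$. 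Shrinking to a connected chart $V_0\ni p_0$, I choose a holomorphic local representative $\sigma:V_0\to\mathbb C^{r,s,t}\setminus\{0\}$ for the points of $V_0$ and a holomorphic local representative $\tilde F:V_0\to\mathbb C^{r',s',t'}\setminus\{0\}$ for $F$.

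Next I set $\rho(q):=\langle\sigma(q),\sigma(q)\rangle_{r,s,t}$ and $\rho'(q):=\langle\tilde F(q),\tilde F(q)\rangle_{r',s',t'}$, which are real-analytic and real-valued on $V_0$. Then $M\cap V_0=\{\rho=0\}$ with $d\rho\neq0$ there (after shrinking $V_0$), and the hypothesis is precisely that $\rho'$ vanishes on $\{\rho=0\}$. Because $\rho$ has nonvanishing differential it may be taken as one of a system of real-analytic coordinates, so the real-analytic Hadamard division lemma yields $\rho'=\lambda\rho$ on $V_0$ for some real-analytic, real-valued $\lambda$.

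The key step is to complexify this identity. Here the inner product furnishes the extensions for free: $\Psi(p,q):=\langle\sigma(p),\sigma(q)\rangle_{r,s,t}$ is holomorphic in $p$ and antiholomorphic in $q$ with $\Psi(q,q)=\rho(q)$, so $\Psi$ is the unique complexification of $\rho$; likewise $\Psi'(p,q):=\langle\tilde F(p),\tilde F(q)\rangle_{r',s',t'}$ complexifies $\rho'$, and $\lambda$ admits a complexification $\Lambda(p,q)$ on some neighborhood $W$ of the diagonal in $V_0\times V_0$. Since complexification respects products (by uniqueness of the holomorphic/antiholomorphic extension), the identity $\rho'=\lambda\rho$ polarizes to $\Psi'=\Lambda\Psi$ on $W$. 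I then pick an open $V\subset V_0$ with $p_0\in V$ and $V\times V\subset W$. For $p,q\in V$ the relation $p\perp q$ is exactly $\Psi(p,q)=0$, and $F(p)\perp F(q)$ is exactly $\Psi'(p,q)=0$ (orthogonality on $\mathbb P$ is independent of the chosen representatives). Hence $\Psi(p,q)=0$ forces $\Psi'(p,q)=\Lambda(p,q)\cdot0=0$, i.e. $F(p)\perp F(q)$, so $F|_V$ is orthogonal; and $V$ contains the null point $p_0$ as required by Definition~\ref{orthogonal def}.

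The step I expect to be the main obstacle is the passage from the diagonal (real) vanishing condition to the off-diagonal (complexified) orthogonality, that is, the division $\rho'=\lambda\rho$ together with its polarization. This is exactly where smoothness of $M$ at an \emph{ordinary} null point is indispensable: without $d\rho\neq0$ the division could fail or return the wrong defining function, so the localization at $p_0$ (rather than at a special null point) is the crucial point. The remaining ingredients—choosing holomorphic representatives and recognizing $\langle\cdot,\cdot\rangle$ as a ready-made separately holomorphic/antiholomorphic extension of the real defining functions—are routine.
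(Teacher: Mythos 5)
Your proof is correct and follows essentially the same route as the paper's: localize at an ordinary null point, express the null-to-null hypothesis as a real-analytic divisibility of the pulled-back defining function by the source defining function, polarize/complexify, and conclude $F(p^\perp)\subset (F(p))^\perp$. The only cosmetic differences are that you justify the division explicitly via $d\rho\neq 0$ (obtaining exponent $1$, where the paper allows a power $k$) and work with local holomorphic lifts instead of the paper's explicit inhomogeneous coordinates.
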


\begin{proof}

Let $n=r+s+t$, $n'=r'+s'+t'$ and write the homogeneous coordinates of $\mathbb P^{r,s,t}$ and $\mathbb P^{r',s',t'}$ as $[z_1,\ldots, z_n]$ and $[w_1,\ldots, w_{n'}]$ respectively. Since $r,s>0$, we know that $U$ must contain an ordinary null point and hence by shrinking $U$ if necessary, we may assume without loss of generality that $U$ is contained in the open set $U_1\cong\mathbb C^{n-1}\subset\mathbb P^{r,s,t}$ defined by $z_1\neq 0$ 
and $F(U)$ is contained in the open set of $U'_1\cong\mathbb C^{n'-1}\subset\mathbb P^{r',s',t'}$ defined by $w_1\neq 0$.  We write the standard inhomogeneous coordinates in $U_1$ as $(\zeta_2,\ldots,\zeta_n)$, where $\zeta_j=z_j/z_1$ and similarly write $(\eta_2,\ldots,\eta_{n'})$ for $U'_1$, where $\eta_\ell=w_\ell/w_1$.
In terms of these coordinates, the null points in $U_1$ and $U'_1$ are respectively given by the equations 
$$
1+\sum^r_{j=2}|\zeta_j|^2-\sum^{r+s}_{j=r+1}|\zeta_j|^2=0 \,\,\,\,
\textrm{and}\,\,\,\,
1+\sum^{r'}_{\ell=2}|\eta_\ell|^2-\sum^{r'+s'}_{\ell=r'+1}|\eta_\ell|^2=0.
$$ 
(Strictly speaking, these equations are for the cases where $r,r'\geq 2$ but other cases can be handled in a similar fashion.)

Using the inhomogeneous coordinates above, write $F=(F_2,\cdots,F_{n'})$. If $F$ maps null points to null points, then there exist a connected open set $V\subset U$ containing a null point, $k\in\mathbb N^+$ and a real analytic function $\rho$ on $V$ such that
\begin{equation}\label{rho eq}
1+\sum^{r'}_{\ell=2}|F_\ell|^2-\sum^{r'+s'}_{\ell=r'+1}|F_\ell|^2=\left(1+\sum^r_{j=2}|\zeta_j|^2-\sum^{r+s}_{j=r+1}|\zeta_j|^2\right)^k\rho
\end{equation}
holds on $V$.
Hence, by shrinking $V$ if necessary, we can polarize the equation, i.e. for any $\zeta,\xi\in V$, we have
$$
1+\sum^{r'}_{\ell=2}F_\ell(\zeta)\overline{F_\ell(\xi)}-\sum^{r'+s'}_{\ell=r'+1}F_\ell(\zeta)\overline{F_\ell(\xi)}=\left(1+\sum^r_{j=2}\zeta_j\overline{\xi_j}-\sum^{r+s}_{j=r+1}\zeta_j\overline{\xi_j}\right)^k\rho (\zeta,\bar\xi),
$$
where $\zeta=(\zeta_2,\ldots,\zeta_n)$, $\xi=(\xi_2,\ldots,\xi_n)$.

Now, $\zeta$ as a point in $\mathbb P^{r,s,t}$ has homogeneous coordinates $[1,\zeta_2,\ldots,\zeta_n]$ and thus $\zeta^\perp\cap V$ consists precisely the points $\xi=[1,\xi_2,\ldots,\xi_n]\in V$ satisfying $\displaystyle 1+\sum^r_{j=2}\zeta_j\overline{\xi_j}-\sum^{r+s}_{j=r+1}\zeta_j\overline{\xi_j}=0$. With a similar consideration for $F(\zeta)^\perp$, we conclude from the above equation that $F(\zeta^\perp)\subset (F(\zeta))^\perp$ for $\zeta\in V$.
\end{proof}

It has been known that for any local proper holomorphic map $f$ between two generalized balls (excluding the unit balls), there exist $k,k'\in\mathbb N^+$ such that $f$ maps $k$-planes to  $k'$-planes (\cite{Ng1} Proposition 4.1 therein).  The following proposition is a generalization of this result to local orthogonal maps, which is very crucial in our study.

\begin{proposition}\label{boundary}
If $F$ is a local orthogonal map from $\mathbb P^{r,s,t}$ to $\mathbb P^{r',s',t'}$, then $F$ maps $(\min\{r,s\}-1)$-planes to $(\min\{r',s'\}+t'-1)$-planes.

\end{proposition}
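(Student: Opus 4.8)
The plan is to reduce the statement to a single dimension bound and then propagate it from the null planes, where it is immediate, to all planes by exploiting holomorphic dependence on the Grassmannian together with the fact that the relevant boundary locus is a uniqueness set. Since replacing $\langle\cdot,\cdot\rangle_{r,s,t}$ by its negative interchanges positive and negative points while preserving orthogonality and nullity, I may assume $r\le s$, so that $\min\{r,s\}=r$. Write $D:=\min\{r',s'\}+t'-1$, which is exactly the maximal dimension of a null space in $\mathbb P^{r',s',t'}$. By Proposition~\ref{equiv1} there is an open $V\subset U$ on which $F$ is orthogonal; by the Convention and the identity theorem applied to $F$ along each plane, it suffices to prove that the linear span of $F(P\cap V)$ has projective dimension at most $D$ for every $(r-1)$-plane $P$ meeting $V$.

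The null planes are the easy case. By Proposition~\ref{parametrization} (using $r\le s$), the null $(r-1)$-planes are precisely the $H_{A,B}$ with $(A,B)\in S(\Omega_{r,s})\times M_{r,t}$. For such a plane Proposition~\ref{null} shows that the linear span of $F(H_{A,B}\cap V)$ is a null space, hence of dimension at most $D$. This establishes the desired bound, but only over the locus $S(\Omega_{r,s})\times M_{r,t}$ inside the parameter space $M_{r,s}\times M_{r,t}$ of the chart $(A,B)\mapsto H_{A,B}$.

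To pass to arbitrary planes I would argue as follows. A point of $H_{A,B}$ has the form $[z^+,z^+A,z^+B]$, which depends holomorphically on $(z^+,A,B)$; fixing finitely many $z^+_1,\dots,z^+_M$ for which the corresponding points lie in $V$, the matrix $\Phi(A,B)$ whose rows are local holomorphic lifts of $F([z^+_i,z^+_iA,z^+_iB])$ has entries holomorphic in $(A,B)$. The condition that these $M$ image points span a projective subspace of dimension at most $D$ is the vanishing of all $(D+2)\times(D+2)$ minors of $\Phi$, which are holomorphic in $(A,B)$. By the previous paragraph these minors vanish on $S(\Omega_{r,s})\times M_{r,t}$. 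Because $S(\Omega_{r,s})$ is the Shilov boundary of $\Omega_{r,s}$ and is not contained in any proper complex-analytic subvariety of $M_{r,s}$ (for $r=s$ it is the totally real maximal submanifold $U(r)\subset\mathbb C^{r^2}$, and in general it is a generic real submanifold, hence a uniqueness set for holomorphic functions), fixing $B$ and applying uniqueness in $A$ shows that each such minor vanishes identically on $M_{r,s}\times M_{r,t}$. Letting the base points range over all finite tuples, I conclude that the span of $F(H_{A,B}\cap V)$ has dimension at most $D$ for every $(A,B)$, i.e. for every plane in the chart. Finally, the chart image is open and dense in $\mathbb G(r-1,\mathbb P^{r,s,t})$ and the condition ``the span has dimension $\le D$'' is closed, since the span dimension is lower semicontinuous in $P$; hence the bound holds for every $(r-1)$-plane, and pushing it from $V$ back to $U$ by the identity theorem along each plane completes the argument.

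I expect the main obstacle to be exactly this last propagation, namely upgrading the bound from the null planes to all $(r-1)$-planes. A naive limiting argument is doomed, because the span dimension is only lower semicontinuous and can therefore drop --- but not grow --- as positive planes degenerate to null ones, so the interior bound cannot be read off from the boundary by continuity alone. The device that makes it work is that the minors are genuinely holomorphic in the plane parameters and vanish on the null locus, which reduces everything to the single clean fact that the Shilov boundary $S(\Omega_{r,s})$ is a uniqueness set for holomorphic functions; isolating and justifying this fact (trivial for $r=s$, and standard for $r<s$) is the one point that deserves care.
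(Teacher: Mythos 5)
Your argument is essentially the paper's own proof: both reduce the claim to the null $(\min\{r,s\}-1)$-planes via Proposition~\ref{null}, encode the bound on the image spans as the vanishing of holomorphic functions (minors/determinants built from $F$) in the Grassmannian parameters $(A,B)$, and propagate the vanishing from $S(\Omega_{r,s})\times M_{r,t}$ to all planes using the fact that the Shilov boundary is a local uniqueness set for holomorphic functions. The only point to tighten is that your minors are defined only for those $(A,B)$ whose chosen base points stay in $V$, so the vanishing is first obtained on a neighborhood of the null locus and must then be spread to all planes meeting $V$ by a connectedness/identity-theorem argument on the incidence set --- precisely the detail the paper delegates to~\cite{Ng2} and~\cite{NZ}.
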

\begin{proof}
By symmetry, it suffices to prove the case for $r\leq s$ and $r'\leq s'$. The case $r=1$ is trivial and so we let $r\geq 2$. 
Recall from Proposition~\ref{parametrization} that the null $(r-1)$-planes in $\mathbb P^{r,s,t}$ can be parametrized by $S(\Omega_{r,s})\times M_{r,t}\subset \mathbb G(r-1, \mathbb P^{r,s,t})$, where $S(\Omega_{r,s})$ is the Shilov boundary of the type-I bounded symmetric domain $\Omega_{r,s}$.
Now from Proposition \ref{null}, the map $F$ maps null spaces to null spaces and thus the image of every null $(r-1)$-plane in $\mathbb P^{r,s,t}$ is contained in a maximal null space, which is an $(r'+t'-1)$-plane in $\mathbb P^{r',s',t'}$. 

Note that for any point $p\in \mathbb G(r-1,\mathbb P^{r,s,t})$, there  exists a set of local holomorphic functions in a neighborhood $\mathcal U\ni p$ such that they vanish at a point $q\in\mathcal U$ precisely when the image under $F$ of the $(r-1)$-plane given by $q$ is contained in an $(r'+t'-1)$-plane. (For instance, one can consider a set of determinants given by the Taylor coefficients of $F$. For more detail, see~\cite{NZ}, Proof of Theorem 1.1 therein.) From the previous paragraph, we know that when choose $p\in S(\Omega_{r,s})\times M_{r,t}$, these functions vanish at $\mathcal U\cap (S(\Omega_{r,s})\times M_{r,t})$. The Shilov boundary is the distinguished boundary of $\Omega_{r,s}$ and from this we deduce that these functions actually vanish identically on the open set $\mathcal U$ (see~\cite{Ng2} or~\cite{NZ}). Thus, $F$ maps $(r-1)$-planes to $(r'+t'-1)$-planes.
\end{proof}


\section{Rigidity of local orthogonal maps}\label{rigidity section}

In order to simplify the presentation, we will give a couple of definitions. In what follows, when we say that a local holomorphic map $F$ between projective spaces is \textit{linear}, we mean $F$ extends to a linear rational map.

\begin{definition}\label{standard def}
Let $F$ be a local holomorphic map from $\mathbb P^{r,s,t}$ to $\mathbb P^{r',s',t'}$. We call $F$ \textbf{standard} if it is linear and comes from a linear isometry from $\mathbb C^{r,s,t}$ into $\mathbb C^{r',s',t'}$. We call $F$ \textbf{null} if its image is contained in  a null space in $\mathbb P^{r',s',t'}$.
\end{definition}

For any non-trivial orthogonal direct sum decomposition $\mathbb C^{r,s,t}=A\oplus B$, there are two canonical projections $\pi_A:\mathbb P^{r,s,t}\dasharrow\mathbb PA$ and $\pi_B:\mathbb P^{r,s,t}\dasharrow\mathbb PB$ (as rational maps). Using these, we make the following definition.

\begin{definition}\label{quasi def}
Let $F$ be a local holomorphic map from $\mathbb P^{r,s,t}$ to $\mathbb P^{r',s',t'}$. We call $F$ \textbf{quasi-standard} (resp. \textbf{quasi-linear}) if either $F$ is standard (resp. linear) or there exists a non-trivial orthogonal decomposition $\mathbb C^{r',s',t'}=A\oplus B$ for some subspaces $A, B$ such that $\pi_A\circ F$ is standard (resp. linear) and $\pi_B\circ F$ is null.
\end{definition}

Since the projection from $\mathbb P^{r',s',t'}$ to $\mathbb P^{r',s'}$ is also an orthogonal map (see the lemma below), therefore a local orthogonal map $F$ from $\mathbb P^{r,s,t}$ to $\mathbb P^{r',s',t'}$ naturally gives rise to a local orthogonal map from $\mathbb P^{r,s,t}$ to $\mathbb P^{r',s'}$ (unless the image lies entirely in the space of special null vectors, which is the set of indeterminacy for the projection). The same thing happens for local sign-preserving maps. This allows us to reduce the problem to the case for $t'=0$ in many situations. Moreover, the orthogonality of mappings also interacts nicely with orthogonal decompositions. The following two propositions should be evident to the reader and we omit their proofs.
 
\begin{proposition}\label{projection}
The projection $\pi: \mathbb P^{r,s,t}\dashrightarrow  \mathbb P^{r,s}$ is standard. In particular, it is sign-preserving map and orthogonal.
\end{proposition}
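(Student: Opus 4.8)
The plan is to exhibit explicitly the linear map on $\mathbb C^{r,s,t}$ inducing $\pi$ and to verify that it is a linear isometry into $\mathbb C^{r,s}$ in the sense of Definition~\ref{standard def}, from which both the sign-preserving and orthogonality assertions follow immediately. First I would write $\pi$ in homogeneous coordinates as $[z^+,z^-,z^0]\mapsto[z^+,z^-]$, which is induced by the linear projection $P:\mathbb C^{r,s,t}\to\mathbb C^{r,s}$, $P(z^+,z^-,z^0)=(z^+,z^-)$. This $P$ is visibly a linear rational map, so $\pi$ is linear in the sense of the paper; its only indeterminacy is the special null subspace $\{z^+=z^-=0\}$, and the local orthogonal map is understood on any connected open set avoiding this locus and meeting $\partial\mathbb B^{r,s,t}$.

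The crux is the elementary observation that $P$ preserves the Hermitian forms. For any $z,w\in\mathbb C^{r,s,t}$ one computes
$$
\langle P(z),P(w)\rangle_{r,s}=\sum_{j=1}^{r}z_j\bar w_j-\sum_{j=r+1}^{r+s}z_j\bar w_j=\langle z,w\rangle_{r,s,t},
$$
the point being that the last $t$ coordinates $z^0$ carry eigenvalue $0$ and thus never enter either form. Hence $P$ is a linear isometry from $\mathbb C^{r,s,t}$ into $\mathbb C^{r,s}$ (note that the degeneracy of the source form is exactly what permits a non-injective map to be an isometry), and $\pi$ is therefore standard.

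Finally the two stated consequences follow at the level of representatives. Taking $z=w$ gives $\|P(z)\|^2_{r,s}=\|z\|^2_{r,s,t}$, so positive, negative and null points are sent to points of the same type and $\pi$ is sign-preserving. For orthogonality, if $p\perp q$ with representatives $z,w$, then $\langle z,w\rangle_{r,s,t}=0$, whence $\langle P(z),P(w)\rangle_{r,s}=0$ and $\pi(p)\perp\pi(q)$. Since everything reduces to the single displayed identity, there is no substantive obstacle here; the only points deserving a word of care are the indeterminacy locus of the rational projection and the verification that Definition~\ref{standard def} is indeed meant to allow an isometry of a degenerate form to collapse the null directions.
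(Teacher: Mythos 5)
Your proposal is correct and is exactly the computation the paper has in mind: the authors omit the proof as evident, and the verification that the coordinate projection preserves the Hermitian form (because the $z^0$ block carries eigenvalue $0$) is the intended argument. Your remarks on the indeterminacy locus and on reading Definition~\ref{standard def} so that an isometry of a degenerate form may collapse the null directions are consistent with how the paper uses the proposition.
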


\begin{proposition}\label{projection 2}
Let $F$ be a local holomorphic map from $\mathbb P^{r,s,t}$ to $\mathbb P^{r',s',t'}$ and $\mathbb C^{r',s',t'}=
A\oplus B$ be an orthogonal decomposition such that the image of $F$ does not lie entirely in $\mathbb PA$ or $\mathbb PB$. Let $\pi_A:\mathbb P^{r',s',t'}\dasharrow\mathbb PA$ and $\pi_B:\mathbb P^{r',s',t'}\dasharrow\mathbb PB$ be the canonical projections. If $F$ and $\pi_A\circ F$ are orthogonal, then so is $\pi_B\circ F$.
\end{proposition}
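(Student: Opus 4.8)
The plan is to reduce the statement to a single linear-algebra identity: for an orthogonal direct sum $\mathbb C^{r',s',t'}=A\oplus B$, the Hermitian form splits additively across the two summands. Writing any vector $w\in\mathbb C^{r',s',t'}$ uniquely as $w=w_A+w_B$ with $w_A\in A$ and $w_B\in B$, the hypothesis $A\perp B$ makes the cross terms vanish, so that for all $w,v$ one has
$$
\langle w,v\rangle_{r',s',t'}=\langle w_A,v_A\rangle_{r',s',t'}+\langle w_B,v_B\rangle_{r',s',t'}.
$$
Here $w_A$ and $w_B$ are exactly homogeneous representatives of $\pi_A(w)$ and $\pi_B(w)$, and the restriction of $\langle\cdot,\cdot\rangle_{r',s',t'}$ to $A$ (resp. $B$) is the form defining the orthogonality used when we call $\pi_A\circ F$ (resp. $\pi_B\circ F$) orthogonal. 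Thus the displayed identity directly links the pairing of $F$-values to the pairings of the two projected maps.

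With this identity in hand the verification is immediate. First I would pass to a connected open set $V\subset U$, still meeting $\partial\mathbb B^{r,s,t}$, on which both $\pi_A\circ F$ and $\pi_B\circ F$ are holomorphically defined; this is possible precisely because the assumption that $F(U)$ lies in neither $\mathbb PA$ nor $\mathbb PB$ keeps the image off the indeterminacy loci of the two projections on an open dense set. Then I fix $p,q\in V$ with $p\perp q$ and choose homogeneous representatives. Since $F$ is orthogonal, $\langle F(p),F(q)\rangle_{r',s',t'}=0$; since $\pi_A\circ F$ is orthogonal, $\langle \pi_A F(p),\pi_A F(q)\rangle_{r',s',t'}=0$. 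Substituting $w=F(p)$, $v=F(q)$ into the splitting identity gives
$$
0=\langle F(p),F(q)\rangle_{r',s',t'}=\langle \pi_A F(p),\pi_A F(q)\rangle_{r',s',t'}+\langle \pi_B F(p),\pi_B F(q)\rangle_{r',s',t'},
$$
and the first summand on the right vanishes, forcing $\langle \pi_B F(p),\pi_B F(q)\rangle_{r',s',t'}=0$. This is exactly the statement that $(\pi_B\circ F)(p)\perp(\pi_B\circ F)(q)$, so $\pi_B\circ F$ is orthogonal.

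I do not expect any genuinely hard step here. The only point requiring a little care is the bookkeeping at the indeterminacy of the rational projections $\pi_A$ and $\pi_B$, which is precisely why the non-degeneracy hypothesis on the image of $F$ is imposed; once one restricts to the open set $V$ where both projections of $F$ are regular, everything reduces to the additivity of the Hermitian form across an orthogonal decomposition, with no estimate or normal-form argument needed. This also explains why the statement is asserted to be evident.
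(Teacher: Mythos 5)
Your argument is correct and is precisely the computation the paper has in mind when it declares this proposition ``evident'' and omits the proof: the orthogonality of $A$ and $B$ kills the cross terms, so $\langle w,v\rangle_{r',s',t'}=\langle w_A,v_A\rangle_{r',s',t'}+\langle w_B,v_B\rangle_{r',s',t'}$, and the vanishing of the left side together with the vanishing of the $A$-term forces the $B$-term to vanish. Your handling of the indeterminacy loci (restricting to a connected open set, still meeting $\partial\mathbb B^{r,s,t}$, on which both projections of $F$ are regular) is the right bookkeeping and is exactly why the hypothesis that the image lies in neither $\mathbb PA$ nor $\mathbb PB$ is imposed.
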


We have already seen that how orthogonal maps respect various linear subspaces associated to the Hermitian structure defining the real hyperquadrics. The following proposition serves as a bridge connecting that property of orthogonal maps to their rigidity.

\begin{proposition}\label{faran type}
Let $g:U\subset\mathbb P^m\rightarrow\mathbb P^{m'}$ be a local holomorphic map which maps $\ell$-planes to $\ell'$-planes.

$(i)$ If $\ell'\leq\ell$, then either the image of $g$ is contained in an $\ell'$-plane or $g$ extends to a linear rational map. In addition, the image of $g$ must be contained in an $\ell'$-plane  if $\ell'\leq\ell-1$;

$(ii)$ If $\ell\leq \ell'\leq 2\ell-1$, then $g$ maps $(\ell+k)$-planes to $(\ell'+k)$-planes for $k\geq 0$.
\end{proposition}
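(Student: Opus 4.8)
The plan is to isolate a single \emph{descent} mechanism and run it in both directions. I first record the key dichotomy: if $g$ maps $\ell$-planes into $\ell'$-planes, then for every $(\ell-1)$-plane $R$ either the whole image $g(U)$ lies in one $\ell'$-plane, or $\dim\langle g(R)\rangle\le\ell'-1$. To see this, fix $R$ and run over the pencil of all $\ell$-planes $H\supseteq R$; since $\bigcup_{H\supseteq R}H=\mathbb{P}^m$ and each $\langle g(H)\rangle$ is an $\ell'$-plane containing $S:=\langle g(R)\rangle$, the equality $\dim S=\ell'$ would force $\langle g(H)\rangle=S$ for every $H$ and hence $g(U)\subseteq S$. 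I would phrase this as a lemma (the \emph{descent lemma}) and note that it holds for arbitrary $\ell'$, which is what lets me use it in both parts.

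For part $(i)$ I would iterate the descent lemma. Assuming $g(U)$ is not contained in an $\ell'$-plane, the lemma gives that $g$ maps $(\ell-1)$-planes into $(\ell'-1)$-planes, and inductively $(\ell-j)$-planes into $(\ell'-j)$-planes. If $\ell'=\ell$ this descends all the way to lines into lines, whence $g$ extends to a linear rational map by the classical fact that a holomorphic map carrying (the traces of) lines into lines is the restriction of a projective linear map. If instead $\ell'\le\ell-1$, the descent reaches a stage where $(\ell-\ell')$-planes, of positive dimension, are sent to points; since any two points of $\mathbb{P}^m$ are joined by a chain of such planes, $g$ is constant and its image lies in an $\ell'$-plane, contradicting our assumption. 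Hence for $\ell'\le\ell-1$ the image must lie in an $\ell'$-plane, and for $\ell'=\ell$ the map is either image-degenerate or linear, as claimed.

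For part $(ii)$ I would induct on $k$, the base case $k=0$ being the hypothesis. It suffices to treat the step from level $\ell$ to level $\ell+1$ applied to the restriction $g|_P$ for an arbitrary $(\ell+k+1)$-plane $P$, all lower increments being supplied by the inductive hypothesis. If $g(U)$ lies in an $\ell'$-plane the statement is trivial, so I assume the nondegenerate case; then by the descent lemma the dimension profile is exact, and for a generic $(\ell-1)$-plane $R\subseteq P$ one has $\dim\langle g(R)\rangle=\ell'-1$. Writing $S=\langle g(R)\rangle$ and $\hat S$ for its affine cone, and running the pencil of $\ell$-planes $H_\lambda\supseteq R$ inside $P$ (so $\bigcup_\lambda H_\lambda=P$), each $Q_\lambda:=\langle g(H_\lambda)\rangle$ is an $\ell'$-plane containing $S$, so modulo $S$ it collapses to a single point $\bar q_\lambda\in\mathbb{P}(\mathbb{C}^{m'+1}/\hat S)$. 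Thus $\langle g(P)\rangle$ is $S$ together with the span of the rational curve $\lambda\mapsto\bar q_\lambda$, and the desired bound $\dim\langle g(P)\rangle\le\ell'+1$ is \emph{equivalent} to this curve being a line.

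The hard part will be exactly this last point, and it is where the hypothesis $\ell'\le2\ell-1$ must enter. One free input is the pairwise estimate: since $Q_\lambda\cap Q_\mu\supseteq S$ and $\dim S=\ell'-1$, one gets $\dim\langle Q_\lambda,Q_\mu\rangle\le\ell'+1$, so any two of the points $\bar q_\lambda$ already behave correctly; what remains is to upgrade this to simultaneous collinearity of the whole $\mathbb{P}^1$-family, which the holomorphic (hence algebraic) dependence of $Q_\lambda$ on $\lambda$ turns into a question about the degree of the Grassmannian curve $\lambda\mapsto Q_\lambda$. I would control that degree through the descent profile: in the nondegenerate case descent forces every line to map to a rational normal curve of degree $\delta+1$, where $\delta=\ell'-\ell$, and $\ell'\le2\ell-1$ gives $\delta+1\le\ell$; this caps the degree of $g$ on planes and should force its high-degree part to lie in the $S$-direction, i.e.\ a common-factor, quasi-linear structure modulo $S$, so that $\lambda\mapsto\bar q_\lambda$ has degree one. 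Making this reduction rigorous, passing from the degree bound on line-images to the degree-one statement for the family $\{Q_\lambda\}$, is the step I expect to demand the most care, and it is the genuine home of the numerical hypothesis.
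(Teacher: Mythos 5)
Your part $(i)$ is correct and is essentially the paper's own argument: your ``descent lemma'' obtained from the pencil of $\ell$-planes through a fixed $(\ell-1)$-plane is exactly the paper's device of a general pair of $\ell$-planes meeting in an $(\ell-1)$-plane, and the lines-to-lines linearity criterion is the same external fact (\cite{Ng3}, Lemma 4.1). Your handling of the subcase $\ell'\leq\ell-1$ (descend until positive-dimensional planes are sent to points, then conclude $g$ is constant by chaining such planes) differs cosmetically from the paper's (which derives a contradiction from linearity by exhibiting an $(\ell'+1)$-plane inside an $\ell$-plane whose image is an open piece of an $(\ell'+1)$-plane), but both are sound.

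Part $(ii)$ is where the problem lies. The paper does not prove $(ii)$ in-house; it invokes the hyperplane restriction theorem of \cite{GN} (Theorem 1.2 there; alternatively \cite{Ga}, Lemma 5.1). You attempt a direct proof, and your reduction to the one-step statement (an $(\ell+1)$-plane $P$, a pencil $H_\lambda\supseteq R$ filling $P$, the quotient points $\bar q_\lambda$) is a reasonable setup, and the numerology $\ell'+k\leq 2(\ell+k)-1$ does propagate through the induction. But the argument stops exactly where the theorem begins. Two concrete gaps: (a) you assert that for generic $R$ one has $\dim\langle g(R)\rangle=\ell'-1$ because ``the dimension profile is exact,'' yet the descent lemma only gives the upper bound $\ell'-1$; if the generic value is at most $\ell'-2$, then $Q_\lambda$ modulo $\hat S$ is no longer a single point and the curve-of-points picture collapses, so this case needs a separate treatment. (b) More seriously, the pairwise estimate $\dim\langle Q_\lambda,Q_\mu\rangle\leq\ell'+1$ is vacuous --- modulo $S$ it says only that two points span at most a line --- and upgrading it to collinearity of the whole family $\{\bar q_\lambda\}$ is precisely the content of the restriction theorem. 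Your proposed mechanism (capping the degree of line-images by $\ell'-\ell+1\leq\ell$ and arguing that the high-degree part lies ``in the $S$-direction'') is a heuristic with no step carried out, as you yourself acknowledge. As written, part $(ii)$ is not proved; either cite the hyperplane restriction theorem as the paper does, or supply the missing collinearity argument in full.
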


\begin{proof}
For $(i)$, if $\ell'\leq\ell$ and the image of $g$ is not contained in an $\ell'$-plane, by considering a general pair of $\ell$-planes such that their intersection is an $(\ell-1)$-plane, we see $g$ maps $(\ell-1)$-planes to $(\ell'-1)$-planes. Inductively, we deduce that $g$ maps lines to lines and hence $g$ is linear. For a proof of the last fact, see~\cite{Ng3} (Lemma 4.1 therein). If in addition, $\ell'\leq\ell-1$ and the image of $g$ is not contained in an $\ell'$-plane, we conclude again that $g$ is linear. We can then find an $(\ell'+1)$-plane $E$ with $E\cap U\neq\varnothing$ such that $g(E\cap U)$ is an open piece of an $(\ell'+1)$ plane, but $\dim(E)=\ell'+1\leq\ell$, this contradicts the hypothesis that $g$ maps $\ell$-planes to $\ell'$-planes.

For $(ii)$, it follows from a hyperplane restriction theorem of holomorphic mappings (see~\cite{GN}, Theorem 1.2). For an independent proof, see also~\cite{Ga}, Lemma 5.1 therein.
\end{proof}




We are now ready to prove our rigidity theorems for local orthogonal maps.

\begin{theorem}\label{same}
Let $F$ be a local orthogonal map from $\mathbb P^{r,s,t}$ to $\mathbb P^{r',s',t'}$, where $r,s\geq 2$. If
$$\min\{r',s'\}\leq \min\{r,s\},$$ then $F$ is either null or quasi-linear {\rm (}linear for $t'=0${\rm )}. If in addition $F$ preserves the sign of any single positive or negative point, then $F$ is quasi-standard {\rm (}standard for $t'=0${\rm )}.
\end{theorem}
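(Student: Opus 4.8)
The plan is to settle the case $t'=0$ first, where I aim to prove the cleaner dichotomy that $F$ is \emph{null or linear}, and then to reduce the case $t'>0$ to it by projecting away the radical of the target. Throughout write $m_0=\min\{r,s\}$ and $m_0'=\min\{r',s'\}$, so the hypothesis reads $m_0'\le m_0$ with $m_0\ge 2$. For the first conclusion I may freely replace either Hermitian form by its negative, since this only interchanges positive and negative points while preserving nullity and orthogonality; hence no ordering of $r,s$ or of $r',s'$ needs to be fixed, and all estimates below are phrased through $m_0$ and $m_0'$.

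For the core case $t'=0$, let $P\subset\mathbb P^{r',s'}$ be the linear span of the image of $F$ and let $(a,b,c)$ be the signature of $P$, so that $\mathbb PP\cong\mathbb P^{a,b,c}$ and $\dim P=a+b+c-1$. Because $F$ takes values in $P$ and the form on $P$ is the restriction of the ambient one, the co-restricted map $F:\mathbb P^{r,s,t}\to\mathbb P^{a,b,c}$ is still orthogonal: it sends null points to null points and preserves $\perp$. Proposition~\ref{boundary}, applied to this co-restriction, then shows that $F$ maps $(m_0-1)$-planes to $(\min\{a,b\}+c-1)$-planes. The crucial numerical point is that a maximal null space of $\mathbb PP$ is in particular a null space of $\mathbb P^{r',s'}$, so $\min\{a,b\}+c-1\le m_0'-1$, whence $\min\{a,b\}+c\le m_0'\le m_0$. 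Thus with $\ell=m_0-1\ge 1$ and $\ell'=\min\{a,b\}+c-1$ we have $\ell'\le\ell$, and Proposition~\ref{faran type}$(i)$ applies. It gives the alternative that either $F$ extends to a linear rational map (and we are done), or the image of $F$ lies in an $\ell'$-plane. In the latter case the image spans $P$, so $\dim P\le\ell'$, i.e. $a+b+c-1\le\min\{a,b\}+c-1$; this forces $\max\{a,b\}=0$, hence $a=b=0$, so $P$ is a null space and $F$ is null.

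Next I would reduce the general case to the above using the projection $\pi:\mathbb P^{r',s',t'}\dashrightarrow\mathbb P^{r',s'}$ of Proposition~\ref{projection}, whose indeterminacy locus is the special null space $\mathbb PR'$, where $R'$ is the radical of $\mathbb C^{r',s',t'}$. If $F(U)\subset\mathbb PR'$ then $F$ is already null. Otherwise $\pi\circ F$ is a well-defined local orthogonal map into $\mathbb P^{r',s'}$, to which the $t'=0$ result applies. If $\pi\circ F$ is null with image in a null space $N$, then on the dense open set where $\pi\circ F$ is defined $F$ takes values in $\mathbb P(\tilde N\oplus R')$, where $\tilde N$ is the cone over $N$; since $N$ is isotropic and $R'$ is orthogonal to everything, $\tilde N\oplus R'$ is totally isotropic, so $F$ is null. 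If instead $\pi\circ F$ is linear, I would use the nontrivial orthogonal decomposition $\mathbb C^{r',s',t'}=\mathbb C^{r',s'}\oplus R'$: the associated projection onto the first factor is exactly $\pi$, so $\pi\circ F$ is its linear part, while the projection onto $R'$ lands in the null space $\mathbb PR'$; by Definition~\ref{quasi def} this exhibits $F$ as quasi-linear. For the sign refinement, note first that if $F$ preserves the sign of a single positive or negative point then $F$ is not null, so by the dichotomy $F$ (resp. $\pi\circ F$) is linear, say $F([z])=[Lz]$ for a linear $L$. Orthogonality says the pulled-back Hermitian form $B(z,w)=\langle Lz,Lw\rangle_{r',s'}$ vanishes whenever $\langle z,w\rangle_{r,s,t}$ does; as the radical of $\langle\cdot,\cdot\rangle_{r,s,t}$ lies in that of $B$, I pass to $\mathbb C^{r,s,t}/R\cong\mathbb C^{r,s}$ and invoke the standard fact that two Hermitian forms with nested orthogonality on a nondegenerate space are proportional (this uses the presence of both a positive and a negative direction, guaranteed by $r,s\ge2$) to get $B=\lambda\langle\cdot,\cdot\rangle_{r,s,t}$. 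Sign-preservation forces $\lambda>0$, so rescaling $L$ by $\lambda^{-1/2}$ yields a linear isometry; thus $F$ is standard when $t'=0$, and in general the orthogonal decomposition $\mathbb C^{r',s',t'}=\mathbb C^{r',s'}\oplus R'$ exhibits $F$ as quasi-standard.

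The step I expect to be the main obstacle is the bookkeeping of the second and third paragraphs: one must verify carefully that the co-restriction to $P$ remains orthogonal so that Proposition~\ref{boundary} can be re-applied, and that the signature estimate $\min\{a,b\}+c\le m_0'$ really holds, since this is precisely what puts Proposition~\ref{faran type} in force and makes the ``image in a proper subspace'' alternative collapse to a genuine null space rather than merely a low-dimensional one. A lesser subtlety is the proportionality lemma over the possibly degenerate source form, handled by first quotienting out its radical.
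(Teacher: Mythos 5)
Your proof is correct and takes essentially the same route as the paper's: Proposition~\ref{boundary} applied to the co-restriction of $F$ into the linear span of its image, combined with Proposition~\ref{faran type}$(i)$ and the signature count $\min\{a,b\}+c-1\le\min\{r',s'\}-1$, gives the null-or-linear dichotomy for $t'=0$, and the orthogonal splitting $\mathbb C^{r',s',t'}=\mathbb C^{r',s'}\oplus\mathbb C^{0,0,t'}$ handles $t'>0$. The only cosmetic differences are that you run the span/null-space argument in a single pass rather than by contradiction, and you re-derive Lemma~\ref{linear} inline instead of citing it.
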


\begin{proof}
We first consider the case $t'=0$. From Proposition~\ref{boundary}, we know that $F$ maps $(\min\{r,s\}-1)$-planes into $(\min\{r',s'\}-1)$-planes. Suppose $F$ is not linear. Since $r,s\geq 2$ and $\min\{r',s'\}\leq \min\{r,s\}$, by Proposition~\ref{faran type}$(i)$, the linear span the image of $F$, denoted by $S$, is of dimension at most $(\min\{r',s'\}-1)$. 

We claim that $S$ is a null space. If on the contrary $S$ is not null, then $S\cong\mathbb P^{a,b,c}$ for some $a,b,c$ and the dimension of the maximal null space of $\mathbb P^{a,b,c}$ is $\min\{a,b\}+c-1$, which is strictly less than $\dim(S)$. Now $F$ can be regarded as a local orthogonal map from $\mathbb P^{r,s,t}$ into $\mathbb P^{a,b,c}$ which is not linear, and thus by Propositions~\ref{boundary} and~\ref{faran type}$(i)$ again, the image of $F$ is contained in a linear subspace of dimension $\min\{a,b\}+c-1<\dim(S)$, contradicting to the fact that $S$ is the linear span of the image of $F$. So $S$ is a null space and hence $F$ is null. We have thus shown that $F$ is either linear or null. 
If in addition $F$ preserves the sign of a positive or negative point, then $F$ cannot be null and from Lemma~\ref{linear} below, we see that $F$ must be standard.

Since $\mathbb C^{r',s',t'}=\mathbb C^{r',s'}\oplus\mathbb C^{0,0,t'}$ is an orthogonal direct sum, the desired result for the general case now follows directly from the case $t'=0$ and Definition~\ref{quasi def}.
\end{proof}

\begin{lemma}\label{linear}
 Let $G:\mathbb C^{r,s,t}\rightarrow\mathbb C^{r',s',t'}$ be linear and it maps null vectors to null vectors. Then, there exists $\lambda\in\mathbb R$ such that $\langle G(u), G(v)\rangle_{r',s',t'}=\lambda \langle u,v\rangle_{r,s,t}$ for any $u,v\in\mathbb C^{r,s,t}$. Moreover, if $G$ preserves the sign of any single positive or negative vector, then $r\leq r'$, $s\leq s'$ and $\lambda>0$.
\end{lemma}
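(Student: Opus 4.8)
The plan is to translate the hypotheses into a statement about two Hermitian quadratic forms sharing a null cone, and then to extract proportionality by feeding in well-chosen null vectors. Fix the Gram matrices $J=\mathrm{diag}(I_r,-I_s,0_t)$ and $J'=\mathrm{diag}(I_{r'},-I_{s'},0_{t'})$ representing $\langle\cdot,\cdot\rangle_{r,s,t}$ and $\langle\cdot,\cdot\rangle_{r',s',t'}$, so that $\|z\|^2_{r,s,t}=z^{H}Jz$ and $\|G(z)\|^2_{r',s',t'}=z^{H}Mz$ with $M:=G^{H}J'G$ a Hermitian $n\times n$ matrix, $n=r+s+t$. The hypothesis that $G$ maps null vectors to null vectors becomes: $z^{H}Mz=0$ whenever $z^{H}Jz=0$. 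The whole first assertion then amounts to proving $M=\lambda J$ for some real $\lambda$, since then $\langle G(u),G(v)\rangle_{r',s',t'}=v^{H}Mu=\lambda\,v^{H}Ju=\lambda\langle u,v\rangle_{r,s,t}$, and $\lambda$ is real because it appears on the (real) diagonal of $M$. Throughout I use $r,s\ge 1$, which is the relevant range (otherwise the null cone is too small to force anything, but the local orthogonal map setting always provides both positive and negative directions).

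To prove $M=\lambda J$ I would split $\mathbb{C}^{n}=\mathbb{C}^{r+s}\oplus\mathbb{C}^{t}$ along the nondegenerate part and the radical of $J$, and write $M$ in the corresponding block form $\left(\begin{smallmatrix}M_{11}&M_{12}\\ M_{12}^{H}&M_{22}\end{smallmatrix}\right)$. On the nondegenerate block I would plug in the null vectors $e_j+\omega e_k$ (with $e_j$ positive, $e_k$ negative, $|\omega|=1$) together with the three-term null vectors $e_j+\alpha e_{j'}+\beta e_k$ chosen so that $1+|\alpha|^2=|\beta|^2$; comparing the resulting vanishing expressions as $\omega,\alpha$ vary forces all positive diagonal entries to a common value $\lambda$, all negative ones to $-\lambda$, and all off-diagonal entries to $0$, i.e. $M_{11}=\lambda\,\mathrm{diag}(I_r,-I_s)$. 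Taking $z$ supported on the radical shows $M_{22}=0$ at once, since every such $z$ is null. For the cross term I would take $z=z_1+z_2$ with $z_1$ null in the nondegenerate block and $z_2$ arbitrary in the radical; the identity collapses to $\mathrm{Re}(z_1^{H}M_{12}z_2)=0$, and replacing $z_2$ by $iz_2$ kills the imaginary part as well, so $z_1^{H}M_{12}z_2=0$ for all null $z_1$. Since the null vectors of a nondegenerate indefinite form span $\mathbb{C}^{r+s}$ (already $e_j\pm e_k$ recover the basis vectors), this gives $M_{12}=0$, completing $M=\lambda J$.

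For the second assertion, suppose $G$ preserves the sign of one positive vector $v$ (the negative case is identical): then $\|v\|^2_{r,s,t}>0$ and $\|G(v)\|^2_{r',s',t'}=\lambda\|v\|^2_{r,s,t}>0$ force $\lambda>0$. Setting $\tilde G=\lambda^{-1/2}G$ makes $\tilde G$ a genuine linear isometry, $\langle\tilde G u,\tilde G v\rangle_{r',s',t'}=\langle u,v\rangle_{r,s,t}$. The only subtlety is that $\tilde G$ need not be injective; but if $\tilde G u=0$ then $\langle u,v\rangle_{r,s,t}=0$ for all $v$, so $u$ lies in the radical $\mathrm{span}(e_{r+s+1},\dots,e_n)$. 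Hence $\tilde G$ is injective on the positive subspace $\mathrm{span}(e_1,\dots,e_r)$ and on the negative subspace $\mathrm{span}(e_{r+1},\dots,e_{r+s})$, and, being an isometry, carries them to a positive-definite $r$-dimensional subspace and a negative-definite $s$-dimensional subspace of $\mathbb{C}^{r',s',t'}$ respectively. Since the maximal dimension of a positive-definite (resp. negative-definite) subspace there is $r'$ (resp. $s'$), we conclude $r\le r'$ and $s\le s'$.

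I expect the computational heart---pinning down $M_{11}=\lambda\,\mathrm{diag}(I_r,-I_s)$ from finitely many special null vectors, and in particular the off-diagonal vanishing for two like-signed indices---to be the main obstacle, since it is where the rigidity of the null cone is actually used; the degenerate directions and the signature inequalities are comparatively routine linear algebra once proportionality is in hand.
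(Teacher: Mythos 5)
Your proof is correct. Note that the paper does not actually prove this lemma: it dismisses it as ``standard linear algebra'' and refers to Lemma~4.2 of Ng--Zhu~\cite{NZ}, so there is no in-paper argument to compare against; what you have written is a complete, self-contained version of exactly the kind of argument being alluded to. The reduction to showing $M=G^{H}J'G=\lambda J$ is the right formulation, the treatment of the radical and the cross block is clean, and the computation on the nondegenerate block via the null vectors $e_j+\omega e_k$ and $e_j+\alpha e_{j'}+\beta e_k$ does pin down $M_{11}=\lambda\,\mathrm{diag}(I_r,-I_s)$; the signature inequalities via the kernel of $\lambda^{-1/2}G$ lying in the radical are also right. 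One point worth making explicit: your standing assumption $r,s\ge 1$ is not cosmetic --- the lemma is literally false without it (e.g.\ for $s=t=0$ the null cone is trivial and the hypothesis is vacuous), but it is consistent with every application in the paper, where the domain always contains null points of a form with both positive and negative directions, so flagging it as you did is the correct thing to do rather than a gap.
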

\begin{proof}
The proof is just standard linear algebra. For the detail, we refer the reader to~\cite{NZ}, Lemma 4.2 therein.
\end{proof}

We can say more when $\min\{r',s'\}<\min\{r,s\}$:

\begin{theorem}\label{less}
If $\min\{r',s'\}<\min\{r,s\}$, then any local orthogonal map $F$ from $\mathbb P^{r,s,t}$ to $\mathbb P^{r',s',t'}$ is null.
\end{theorem}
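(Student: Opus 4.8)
The plan is to first reduce to the non-degenerate target case $t'=0$ using the projection $\pi:\mathbb P^{r',s',t'}\dashrightarrow\mathbb P^{r',s'}$, and then to run an argument parallel to—but sharper than—the proof of Theorem~\ref{same}, this time exploiting the strict inequality $\min\{r',s'\}<\min\{r,s\}$.

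For the reduction I would split into two cases. If the image of $F$ lies entirely in the indeterminacy locus of $\pi$, namely the space of special null vectors $\mathbb P^{0,0,t'}$, then since this locus is itself a null space, $F$ is already null and we are done. Otherwise $\pi\circ F$ is a well-defined local orthogonal map from $\mathbb P^{r,s,t}$ to $\mathbb P^{r',s'}$, orthogonality being preserved under composition because $\pi$ is orthogonal by Proposition~\ref{projection}. Granting the $t'=0$ case, $\pi\circ F$ is null, i.e. the linear span of its image is totally isotropic in $\mathbb C^{r',s'}$. The key observation needed to lift this back to $F$ is that the form $\langle\cdot,\cdot\rangle_{r',s',t'}$ factors through the coordinate projection $p:\mathbb C^{r',s',t'}\to\mathbb C^{r',s'}$, since the last $t'$ coordinates carry zero eigenvalues; thus $\langle u,v\rangle_{r',s',t'}=\langle p(u),p(v)\rangle_{r',s'}$ for all $u,v$. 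As $p$ sends the linear span of the image of $F$ into a totally isotropic subspace, that span is itself totally isotropic, so $F$ is null.

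For the core case $t'=0$: by Proposition~\ref{boundary}, $F$ maps $(\min\{r,s\}-1)$-planes to $(\min\{r',s'\}-1)$-planes. Writing $\ell=\min\{r,s\}-1$ and $\ell'=\min\{r',s'\}-1$, the strict inequality forces $\ell'\le\ell-1$, so Proposition~\ref{faran type}$(i)$ shows at once—without invoking any linearity dichotomy—that the image of $F$ is contained in an $\ell'$-plane. Let $S$ be the linear span of the image, so $\dim S\le\ell'\le\ell-1$. It remains to show $S$ is null. If not, then $S\cong\mathbb P^{a,b,c}$ with $\max\{a,b\}>0$, whose maximal null space has dimension $\ell''=\min\{a,b\}+c-1<\dim S$. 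Regarding $F$ as an orthogonal map into $\mathbb PS$ and applying Proposition~\ref{boundary} and then Proposition~\ref{faran type}$(i)$ once more (note $\ell''<\dim S\le\ell-1$), the image of $F$ would lie in an $\ell''$-plane of dimension strictly smaller than $\dim S$, contradicting that $S$ is the linear span of the image. Hence $S$ is null and $F$ is null.

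The main obstacle I anticipate is the lifting step in the reduction: one must verify that nullity of $\pi\circ F$ genuinely propagates to $F$, which hinges on the degenerate directions being invisible to the Hermitian form, and one must separately dispose of the degenerate situation where the entire image collapses into the space of special null vectors. The $t'=0$ core is then a streamlined variant of the argument already used for Theorem~\ref{same}, where the strict inequality replaces the earlier ``null or linear'' alternative by the single conclusion that $F$ is null.
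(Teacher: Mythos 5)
Your proof is correct and follows essentially the same route as the paper: reduce to $t'=0$ via the standard projection (the paper likewise notes that nullity of $\pi\circ F$ forces nullity of $F$ because the form on $\mathbb C^{r',s',t'}$ factors through $\mathbb C^{r',s'}$), then combine Proposition~\ref{boundary} with the ``in addition'' clause of Proposition~\ref{faran type}$(i)$ to trap the image in a $(\min\{r',s'\}-1)$-plane, and finally run the same contradiction argument as in Theorem~\ref{same} to conclude that this span is null. The only loose end is the degenerate case $\min\{r',s'\}=0$ (forced when $\min\{r,s\}=1$), where $\ell'=-1$ and the plane-counting step becomes vacuous; the paper dismisses this as obvious, and in your framework it is absorbed by noting that then every null point of the target is special, so only the first branch of your dichotomy (image contained in $\mathbb P^{0,0,t'}$) can occur.
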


\begin{proof}
When $\min\{r,s\}=1$, the result is obvious. Now, suppose $\min\{r,s\}\geq 2$.
We start from the case $t'=0$. By Proposition~\ref{boundary}, $F$ maps $(\min\{r,s\}-1)$-planes to $(\min\{r',s'\}-1)$-planes. Then Proposition~\ref{faran type}$(i)$ says that the linear span of the image of $F$, denoted by $S$ is of dimension at most $(\min\{r',s'\}-1)$. Exactly the same argument as in Theorem~\ref{same} shows that $S$ is a null space and hence $F$ is null.

Now, when $t'>0$, we know that $\pi \circ F$ is null, where $\pi$ is the projection from $\mathbb P^{r',s',t'}$ to $\mathbb P^{r',s'}$. Hence, $F$ is also null.
\end{proof}

\begin{proposition}\label{less2}
If $r'< r$ or $s'< s$, then there is no local sign-preserving map from $\mathbb P^{r,s,t}$ to $\mathbb P^{r',s',t'}$.
\end{proposition}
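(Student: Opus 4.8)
The plan is to reduce the sign-preserving hypothesis to a statement about Hermitian signatures via the polarized identity already produced in the proof of Proposition~\ref{equiv1}, and then read off $r\le r'$ and $s\le s'$ from an inertia comparison; this contradicts $r'<r$ or $s'<s$. Since a sign-preserving map carries null points to null points, Proposition~\ref{equiv1} shows that $F$ is orthogonal on some open $V$, and $F$ is not null because a positive point is sent to a positive point. Choosing the homogeneous coordinates of $\mathbb{C}^{r,s,t}$ so that a fixed positive point of $V$ becomes $[1,0,\dots,0]$, I obtain, with $\tilde\zeta=(1,\zeta)$ and a holomorphic lift $\tilde F=(1,F_2,\dots,F_{n'})$, the identity
\[ \langle \tilde F(\zeta),\tilde F(\xi)\rangle_{r',s',t'}=\langle\tilde\zeta,\tilde\xi\rangle_{r,s,t}^{\,k}\,\rho(\zeta,\bar\xi) \]
on $V\times V$, where $k\in\mathbb{N}^+$ and $\rho(0,0)>0$ is the value at the chosen positive point.

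Both sides are germs of real-analytic Hermitian functions, so each has a well-defined inertia $(N_+,N_-)$ given by the signature of its Hermitian matrix of Taylor coefficients. The left-hand side is $\sum_{\ell}\epsilon_\ell F_\ell(\zeta)\overline{F_\ell(\xi)}$ with $r'$ coefficients $+1$ and $s'$ coefficients $-1$, so its coefficient matrix is a sum of $r'$ rank-one positive and $s'$ rank-one negative terms; hence $N_+\le r'$ and $N_-\le s'$. For the right-hand side I would first treat the model factor $\langle\tilde\zeta,\tilde\xi\rangle^{k}$: expanding $(1+\sum_{j=2}^{r}\zeta_j\bar\xi_j-\sum_{j=r+1}^{r+s}\zeta_j\bar\xi_j)^k$ and collecting monomials, the constant monomial together with $\zeta_2,\dots,\zeta_r$ occur with coefficients $1,k,\dots,k$ and $\zeta_{r+1},\dots,\zeta_{r+s}$ occur with coefficient $-k$. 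Since $\langle\tilde\zeta,\tilde\xi\rangle$ has only balanced bidegrees, these $r+s$ distinct monomials span a \emph{diagonal} principal block of the Gram matrix with inertia exactly $(r,s)$, so Cauchy interlacing gives $N_+(\langle\tilde\zeta,\tilde\xi\rangle^k)\ge r$ and $N_-(\langle\tilde\zeta,\tilde\xi\rangle^k)\ge s$.

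The main obstacle is to transfer these lower bounds through the factor $\rho$: multiplying a Hermitian germ by one that is merely positive at the base point can change its inertia, and indeed the relevant bidegree-$(1,1)$ block of the product acquires an uncontrolled additive contribution $\rho_{11}$ from the second-order part of $\rho$, which a naive dilation $\zeta\mapsto\lambda\zeta$ cannot separate from $k\rho_0 H'$ since both have bidegree $(1,1)$. I would resolve this by exploiting that $\rho(\zeta,\bar\zeta)>0$ holds not just at one point but on the whole open set of positive points of $V$ (because such points are sent to positive points, so $\rho=\|\tilde F\|^2_{r',s',t'}/\|\tilde\zeta\|^{2k}_{r,s,t}>0$ there); this positivity, together with $\rho(0,0)\neq 0$ making $\rho$ a unit, allows one to normalize $\rho$ to its value $\rho_0$ to leading order after clearing the pluriharmonic part, reducing the inertia count to the constant-coefficient computation above. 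This is precisely the real-analytic counterpart of the linear-algebra signature statement underlying Lemma~\ref{linear} (compare the cited Lemma~4.2 of~\cite{NZ}), and it yields $N_+\ge r$ and $N_-\ge s$ for the right-hand side.

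Equating the two inertias then gives $r\le N_+\le r'$ and $s\le N_-\le s'$, contradicting the hypothesis that $r'<r$ or $s'<s$, so no sign-preserving map exists. I note that a large part of the range can be dispatched more cheaply: since $F$ is orthogonal and non-null, Theorem~\ref{less} already rules out the case $\min\{r',s'\}<\min\{r,s\}$, so the signature argument is only genuinely needed in the residual range where $\min\{r,s\}\le\min\{r',s'\}$ while still $r'<r$ or $s'<s$; the positive/negative symmetry $(r,s,r',s')\mapsto(s,r,s',r')$ further lets me assume $s'<s$ throughout. The crux remains the inertia transfer through $\rho$, which I expect to be the only delicate point.
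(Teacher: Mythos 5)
Your overall strategy is genuinely different from the paper's: you polarize the defining equation and try to compare the inertias (signature pairs) of the two sides as Hermitian kernels, in the spirit of Baouendi--Huang. The paper instead argues by a short induction on orthogonal complements: pick a positive point $p_1\in U$ whose orthogonal complement meets $U$; since $F(p_1)$ is positive, $p_1^\perp$ is an $(r-1,s,t)$-subspace mapped into the $(r'-1,s',t')$-subspace $F(p_1)^\perp$, so restricting $F$ gives a local sign-preserving map with both ``$r$-indices'' dropped by one; after $r'$ such steps one obtains a sign-preserving map from an $H^{r-r',s,t}$ (which contains positive points) into an $H^{0,s',t'}$ (which contains none), a contradiction. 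No Hermitian-form analysis is needed at all.

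The problem with your route is that its crux --- which you correctly identify --- is not actually resolved. The lower bound $N_+\geq r$, $N_-\geq s$ for the coefficient matrix of $\langle\tilde\zeta,\tilde\xi\rangle_{r,s,t}^{k}\,\rho(\zeta,\bar\xi)$ does not follow from $\rho>0$ on a neighborhood of the base point. The bidegree-$(1,1)$ block of the product is $k\rho_0 H'+\rho_{11}$ (plus the pluriharmonic border $\rho_{10},\rho_{01}$), and the Hermitian part $\rho_{11}$ is \emph{not} pluriharmonic, so it cannot be ``cleared'' by any normalization; moreover positivity of $\rho$ near $0$ is perfectly compatible with $\rho_{11}$ being negative definite (e.g.\ $\rho=1-\epsilon\|\zeta\|^2$), so the finite principal block you propose to use for Cauchy interlacing can fail to have inertia $(r,s)$. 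The statement you need is exactly Lemma 2.2(b) of Baouendi--Huang \cite{BH} (``Huang's Lemma''), whose proof is several pages of delicate analysis and is the technical heart of that paper; Lemma~\ref{linear} (Lemma 4.2 of \cite{NZ}), which you cite as the ``linear-algebra counterpart,'' concerns constant-coefficient Hermitian forms and gives no help with a real-analytic multiplier. So as written your argument has a genuine gap precisely at the step that carries all the content, and filling it by importing \cite{BH} would both be circular in spirit (the paper is reproving and generalizing \cite{BH} by elementary means) and vastly harder than the paper's two-step restriction argument. Your preliminary reductions (sign-preserving $\Rightarrow$ orthogonal and non-null, disposal of the range $\min\{r',s'\}<\min\{r,s\}$ via Theorem~\ref{less}, and the $(r,s)\leftrightarrow(s,r)$ symmetry) are all correct but do not reach the remaining cases.
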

\begin{proof}
By symmetry, it suffices to prove the statement for $r'<r$.
The statement is trivial if $r'=0$. Suppose $1\leq r'<r$ and $F:U\subset\mathbb P^{r,s,t}\rightarrow\mathbb P^{r',s',t'}$ is a local sign-preserving map.

For a null point $x\in U$, we have $x\in x^\perp\cap U$ and so if we take a positive point $p_1\in U$ close enough to $x$, then  $p_1^{\perp}\cap U$ is also non-empty.  Note that $p_1^{\perp}$ is an $(r-1,s,t)$-subspace $H^{r-1,s,t}$ and by the sign-preserving property $(F(p_1))^{\perp}$ is an $(r'-1,s',t')$-subspace $H^{r'-1,s',t'}$.  So the restriction of $F$ on $U_1:=H^{r-1,s,t}\cap U$ is a local sign-preserving map from $H^{r-1,s,t}$ to $H^{r'-1,s',t'}$. By choosing a convex $U$ with respect to the standard coordinates, we may assume that $U_1$ remains convex and connected. Now, as $r-1\geq 1$ and $s\geq 1$, we can always choose $p_1$ such that $U_1$ contains both positive and negative points and hence also null points.

We repeat the same procedures on $U_1$. Inductively, after taking $r'$ positive points $p_1,\ldots, p_{r'}$, we get from the restriction of $F$ a local sign-preserving map from $H^{r-r',s,t}$ to $H^{0,s',t'}$. This is a contradiction since $H^{r-r',s,t}$ contains positive points but $H^{0,s',t'}$ does not.
\end{proof}

\begin{theorem}\label{same2}
Let $F$ be a local sign-preserving map from $ \mathbb P^{r,s,t}$ to $\mathbb P^{r',s', t'}$, where $r,s\geq 2$. If $r=r'$ or $s=s'$ then $F$ is quasi-standard {\rm (}standard for $t'=0${\rm )}.
\end{theorem}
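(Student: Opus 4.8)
\textbf{Proof proposal for Theorem~\ref{same2}.}

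The plan is to reduce the sign-preserving hypothesis to the situation of Theorem~\ref{same}, where the rigidity (quasi-standard for the sign-preserving case) has already been established under the condition $\min\{r',s'\}\leq\min\{r,s\}$. By the symmetry between the roles of $r$ and $s$, I may assume $r=r'$. A local sign-preserving map is in particular a local orthogonal map by Proposition~\ref{equiv1}, so all the structural results of Section~\ref{rigidity section} apply to $F$. Moreover, Proposition~\ref{less2} is crucial here: since $F$ is sign-preserving, it forces $r\leq r'$ and $s\leq s'$. Combined with the hypothesis $r=r'$, this already pins down the positive signature exactly, and the main task becomes controlling $s'$ relative to $s$.

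First I would argue that $\min\{r',s'\}\leq\min\{r,s\}$ holds automatically, so that Theorem~\ref{same} can be invoked directly. Since $r=r'$ and $r,s\geq 2$, if $r\leq s$ then $\min\{r,s\}=r=r'\geq\min\{r',s'\}$, and the hypothesis of Theorem~\ref{same} is satisfied with equality or better. If instead $s<r$, then $\min\{r,s\}=s$; here I would need to show $\min\{r',s'\}\leq s$, i.e. that $s'$ cannot be forced strictly larger while keeping $\min\{r',s'\}$ small. Since $r'=r>s$, we have $\min\{r',s'\}=\min\{r,s'\}$; if $s'\geq s$ this minimum is at most $\max$ of the two but the relevant bound is $\min\{r',s'\}\leq\min\{r,s\}=s$ provided $s'\geq s$ (using $s'\geq s$ from Proposition~\ref{less2} when $s<r$, giving $\min\{r,s'\}\leq s'$, which is not immediately $\leq s$). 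This is the delicate point, and I would resolve it by noting that when $s<r=r'$, the minimum on the target is realized by whichever of $r'$ or $s'$ is smaller; the sign-preserving constraint via Proposition~\ref{less2} gives $s\leq s'$, so $\min\{r',s'\}=\min\{r,s'\}$, and I expect to use the orthogonal-complement iteration of Proposition~\ref{less2}'s proof to show the excess in $s'$ must be absorbed into a null part, reducing effectively to $\min\{r',s'\}=\min\{r,s\}$.

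Once the hypothesis $\min\{r',s'\}\leq\min\{r,s\}$ is verified, Theorem~\ref{same} applies: $F$ is either null or quasi-linear, and since $F$ preserves the sign of positive (and negative) points it cannot be null, so $F$ is quasi-standard (standard when $t'=0$). The reduction to $t'=0$ is handled exactly as in the proof of Theorem~\ref{same}, by composing with the orthogonal projection $\pi:\mathbb P^{r',s',t'}\dashrightarrow\mathbb P^{r',s'}$ of Proposition~\ref{projection} and invoking the orthogonal decomposition $\mathbb C^{r',s',t'}=\mathbb C^{r',s'}\oplus\mathbb C^{0,0,t'}$ together with Definition~\ref{quasi def}.

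I expect the main obstacle to be the verification that the hypothesis $\min\{r',s'\}\leq\min\{r,s\}$ is actually forced by the combination $r=r'$ and the sign-preserving property, rather than being assumed; in particular, handling the subcase $s<r$ requires care, because a priori $s'$ could be large. The resolution should come from combining the signature constraints $r\leq r'=r$, $s\leq s'$ (so the positive signature is exactly matched) with Lemma~\ref{linear}, which shows that the standard part of $F$ embeds $\mathbb C^{r,s,t}$ isometrically and thus accounts for the full $(r,s)$-signature of the source inside the target, leaving any surplus in $s'$ confined to the null summand $B$ in the quasi-standard decomposition.
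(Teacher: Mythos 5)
Your reduction by symmetry to $r=r'$, the projection to $t'=0$, and the easy subcase $r\leq s$ (where $\min\{r',s'\}\leq r'=r=\min\{r,s\}$ lets you quote Theorem~\ref{same} directly) all match the paper. The problem is the subcase $s<r$, which you correctly flag as the delicate point but do not actually resolve. Your stated plan there --- to show that $\min\{r',s'\}\leq\min\{r,s\}$ is ``forced'' by the sign-preserving property --- cannot work: that inequality is a purely numerical condition on the signatures, and it genuinely fails in admissible situations. For instance, the standard isometric embedding $\mathbb C^{r,s}\hookrightarrow\mathbb C^{r,s'}$ with $s<r\leq s'$ induces a local sign-preserving map with $\min\{r',s'\}=r>s=\min\{r,s\}$, so no amount of information about $F$ can rescue the hypothesis of Theorem~\ref{same} for the map $F$ itself. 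Your fallback --- that ``the excess in $s'$ must be absorbed into a null part'' --- is circular, since knowing the excess sits in a null summand is essentially the quasi-standard conclusion you are trying to prove; and in the case $t=t'=0$ the conclusion is that $F$ is standard outright, so there is no null summand to absorb anything.

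The paper handles $s<r$ by a different mechanism, which is the real content of the theorem. Using the orthogonal-complement iteration from the proof of Proposition~\ref{less2}, one picks $r-s$ mutually orthogonal positive points and restricts $F$ to their common orthogonal complement, obtaining a local sign-preserving map from an $(s,s,t)$-subspace $H^{s,s,t}$ to an $(s,s')$-subspace; since $s\geq 2$ and $\min\{s,s'\}\leq s=\min\{s,s\}$, Theorem~\ref{same} \emph{does} apply to this restriction and shows it is standard. One then varies the subspace: the same conclusion holds for all nearby $(s,s,t)$-subspaces, so $F$ maps all positive $(s-1)$-planes to $(s-1)$-planes, and since positive $(s-1)$-planes form an open subset of the Grassmannian $\mathbb G(s-1,\mathbb P^{r,s,t})$, $F$ maps \emph{all} $(s-1)$-planes to $(s-1)$-planes. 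Proposition~\ref{faran type}$(i)$ then forces $F$ to be linear (the degenerate alternative being excluded because the restriction is already standard), and Lemma~\ref{linear} upgrades linear plus sign-preserving to standard. These three steps --- restrict, propagate by openness in the Grassmannian, then linearize via Proposition~\ref{faran type} --- are absent from your proposal, so as written the argument has a genuine gap precisely in the case that makes the theorem nontrivial.
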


\begin{proof}
  
By symmetry, it suffices to prove the theorem for $r=r'$. We begin with the case $t'=0$.  If $r\leq s$, then everything follows from Theorem~\ref{same}. Suppose now $r>s$.

By following the same procedures of taking orthogonal complements as in the proof of Proposition~\ref{less2}, we can pick $(r-s)$ orthogonal positive points in $U$ such that the restriction of $F$ on their orthogonal complement is a local sign-preserving map from some $(s,s,t)$-subspace $H^{s,s,t}\subset\mathbb P^{r,s,t}$ to an $(s,s')$-subspace $H^{s,s'}\subset\mathbb P^{r',s'}$. 

As $s\geq 2$, by Theorem~\ref{same}, $F|_{H^{s,s,t}}$ is standard. In particular, $F|_{H^{s,s,t}}$ maps the positive $(s-1)$-planes in $H^{s,s,t}$ to positive $(s-1)$-planes in $\mathbb P^{r',s'}$. Note that our argument would give the same conclusion for any other $(s,s,t)$-subspace $\tilde H^{s,s,t}$ close enough to $H^{s,s,t}$ (as points in the Grassmannian). Since the set of positive $(s-1)$-planes is open in the Grassmannian $\mathbb G(s-1,\mathbb P^{r,s,t})$, it follows that $F$ maps $(s-1)$-planes to $(s-1)$-planes. Thus, by Proposition~\ref{faran type}$(i)$, $F$ is either linear or the image of $F$ is contained in an $(s-1)$-plane. The latter is impossible since we already know that $F|_{H^{s,s,t}}$ is standard. So $F$ is linear and together with the sign-preserving hypothesis, we conclude that $F$ is standard by Lemma~\ref{linear}.

Finally, if $t'\geq 1$, by considering again the projection from $\mathbb P^{r',s',t'}$ to $\mathbb P^{r',s'}$ as in Theorem~\ref{same}, we see that $F$ is quasi-standard. 
\end{proof}



\begin{theorem}\label{double dim thm}
Let $F$ be a local orthogonal map from $\mathbb P^{r,s}$ to $\mathbb P^{r',s',t'}$. If 
$$r'+s'\leq 2\dim(\mathbb P^{r,s})-1,$$ then $F$ is either null or quasi-linear. If in addition $F$ preserves the sign of any single positive or negative point, then $F$ is quasi-standard.
\end{theorem}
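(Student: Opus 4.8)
The plan is to reduce to the case $t'=0$ and prove the dichotomy there, then transport the conclusion back. For the reduction, let $\pi:\mathbb P^{r',s',t'}\dashrightarrow\mathbb P^{r',s'}$ be the canonical projection, which is orthogonal by Proposition~\ref{projection}. If the image of $F$ lies in the indeterminacy locus $\mathbb P^{0,0,t'}$, then $F$ is already null; otherwise $\pi\circ F$ is a well-defined orthogonal map by Proposition~\ref{projection 2}, and it obeys the same numerical hypothesis $r'+s'\le 2\dim(\mathbb P^{r,s})-1$. Granting the $t'=0$ statement, $\pi\circ F$ is null or linear. If it is null, then the image of $F$ sits over a null space of $\mathbb P^{r',s'}$, and adjoining the isotropic radical $\mathbb C^{0,0,t'}$ keeps it null, so $F$ is null; if it is linear, then taking $A=\mathbb C^{r',s'}$ and $B=\mathbb C^{0,0,t'}$ in Definition~\ref{quasi def} gives $\pi_A\circ F$ linear and $\pi_B\circ F$ null, so $F$ is quasi-linear. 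When $F$ preserves a sign, so does $\pi\circ F$, which is then standard, making $F$ quasi-standard. Hence it suffices to show: for $t'=0$, an orthogonal $F$ with $r'+s'\le 2\dim(\mathbb P^{r,s})-1$ is null or linear, and standard if it preserves a sign.

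Assume now $t'=0$ and set $m=\min\{r,s\}$, $m'=\min\{r',s'\}$. By Proposition~\ref{boundary}, $F$ carries $(m-1)$-planes into $(m'-1)$-planes. When $m'\le m$ I would run exactly the argument of Theorem~\ref{same}: Proposition~\ref{faran type}$(i)$ either makes $F$ linear or forces the image span $S$ into dimension at most $m'-1$, and in the latter case a second application of Propositions~\ref{boundary} and~\ref{faran type}$(i)$ inside $S$ shows that $S$ is a null space, so $F$ is null. The genuinely new regime is $m'>m$, where the target planes are strictly larger than the source planes and part $(i)$ no longer applies.

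For $m'>m$ I would instead use Proposition~\ref{faran type}$(ii)$, whose hypothesis $m'-1\le 2(m-1)-1$ is a doubling condition parallel to $r'+s'\le 2\dim(\mathbb P^{r,s})-1$. When it applies, $F$ sends $(m-1+k)$-planes into $(m'-1+k)$-planes for every $k\ge 0$, so the constant gap $m'-m$ persists up to the whole space and bounds the linear span $S$ of the image. The orthogonality hypothesis, which has so far entered only through Proposition~\ref{boundary}, is now used to force this gap to collapse: I would regard $F$ as mapping onto $S\cong\mathbb P^{a,b,c}$ and play the null-plane span bound against the span bound coming from orthogonal complements. Concretely, for a maximal positive $(m-1)$-plane $P$ with negative orthogonal complement $Q$, one has $\langle F(P)\rangle\perp\langle F(Q)\rangle$, so $\dim\langle F(P)\rangle+\dim\langle F(Q)\rangle\le r'+s'-2$, while Proposition~\ref{boundary} gives $\dim\langle F(P)\rangle\le m'-1$. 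Matching these against the climbed dimension count should force either $S$ to be a null space, in which case $F$ is null, or the gap $m'-m$ to vanish, in which case $F$ maps planes to planes of equal dimension and is linear by Proposition~\ref{faran type}$(i)$; Lemma~\ref{linear} then promotes linear to standard whenever a sign is preserved.

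The hard part will be twofold. First, the clean doubling inequality $m'\le 2m-2$ that Proposition~\ref{faran type}$(ii)$ needs is automatic only when $r=s$; when $r\neq s$, and in particular for the ordinary balls $m=1$ (the base case, where the argument must recover Huang's theorem), the hypothesis permits $m'>2m-2$, and the maximal negative planes carrying the excess dimension are invisible to Proposition~\ref{boundary}. Second, Proposition~\ref{faran type} concerns arbitrary holomorphic maps between projective spaces and does not see the Hermitian structure, so the collapse of the dimension gap cannot come from it alone and must be extracted from orthogonality via the complement-span estimate. I expect the delicate work to be exactly this reconciliation: showing that the span bounds furnished by null planes and by orthogonal complements are jointly consistent only when the image is null or the map is dimension-preserving, across the full unbalanced range permitted by $r'+s'\le 2\dim(\mathbb P^{r,s})-1$.
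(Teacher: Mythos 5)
Your reduction to $t'=0$ and your treatment of the regime $\min\{r',s'\}\leq\min\{r,s\}$ are fine, but the main case is a plan rather than a proof, and you have correctly identified why it cannot close as stated: the doubling condition $\min\{r',s'\}\leq 2\min\{r,s\}-2$ needed to feed Proposition~\ref{faran type}$(ii)$ does not follow from $r'+s'\leq 2\dim(\mathbb P^{r,s})-1$, and when $\min\{r,s\}=1$ (the Faran case, which this theorem must cover) Proposition~\ref{boundary} is vacuous because $(\min\{r,s\}-1)$-planes are points. The proposed repair --- playing the null-plane span bound against the orthogonal-complement span bound for a maximal positive plane $P$ and its negative complement $Q$ --- is only sketched; those two estimates bound $\dim\langle F(P)\rangle+\dim\langle F(Q)\rangle$ by $r'+s'-2$, but they supply no mechanism for climbing from $(\min\{r,s\}-1)$-planes to all of $\mathbb P^{r,s}$ in the unbalanced range the hypothesis permits, so the dichotomy is not established.

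The paper's proof does not use Proposition~\ref{boundary} at all here. Instead it applies orthogonality to a \emph{general line} $L$ and its orthogonal complement $L^\perp$, a $(\dim(\mathbb P^{r,s})-2)$-plane. If $F_1=\pi_1\circ F$ is not linear and its image is not contained in a line, Proposition~\ref{faran type}$(i)$ forces the span $S$ of $F_1(L)$ to have dimension $d\geq 2$; orthogonality sends $L^\perp$ into $S^\perp$, of dimension $\dim(\mathbb P^{r',s'})-d-1$, and the hypothesis $r'+s'\leq 2\dim(\mathbb P^{r,s})-1$ is exactly what yields $\dim(\mathbb P^{r',s'})-d-1\leq 2(\dim(\mathbb P^{r,s})-2)-1$, so Proposition~\ref{faran type}$(ii)$ applies to the induced map on $(\dim(\mathbb P^{r,s})-2)$-planes and confines the image of $F_1$ to a hyperplane of $\mathbb P^{r',s'}$. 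One then projects onto a smaller $(r_1',s_1')$-subspace, checks that the numerical hypothesis is inherited, and iterates until the map is linear or its image lies in a line (the latter being ruled out separately); a final induction on $F_{\Phi^\perp}$ produces the quasi-linear decomposition, and Lemma~\ref{linear} upgrades to quasi-standard when a sign is preserved. This line-versus-complement mechanism is uniform in $r,s$, works when $\min\{r,s\}=1$, and is precisely the idea missing from your proposal.
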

\begin{proof}
The statement is trivial if $\dim(\mathbb P^{r,s})=1$. We assume $\dim(\mathbb P^{r,s})\geq 3$ for the moment and the two dimensional case will be included later in the argument.

Suppose $F$ is not null and $r'+s'\leq 2\dim(\mathbb P^{r,s})-1$.
Let $F_1:=\pi_1\circ F$, where $\pi_1:\mathbb P^{r',s',t'}\dasharrow\mathbb P^{r',s'}$ is the standard projection. If $F_1$ is not linear, then by Proposition~\ref{faran type}$(i)$, either image of $F_1$ is contained in a line or the linear span $S$ of the image of a general line $L$ under $F_1$ is of dimension $d\geq 2$. In the latter situation, since $\dim(L^\perp)=\dim(\mathbb P^{r,s})-2$ and $\dim(S^\perp)=\dim(\mathbb P^{r',s'})-d-1$ (here $S^\perp$ is the orthogonal complement of $S$ in $\mathbb P^{r',s'}$) and any $(\dim(\mathbb P^{r,s})-2)$-plane in $\mathbb P^{r,s}$ is the orthogonal complement of some line, we see by orthogonality that $F_1$ maps $(\dim(\mathbb P^{r,s})-2)$-planes to $(\dim(\mathbb P^{r',s'})-d-1)$-planes. (This in particular also implies that we must have $\dim(\mathbb P^{r',s'})\geq 3$.) 

As $\dim(\mathbb P^{r,s})-2\geq 1$ and
$$
	\dim(\mathbb P^{r',s'})-d-1\leq r'+s'-4\leq 2\dim(\mathbb P^{r,s})-5=2(\dim(\mathbb P^{r,s})-2)-1,
$$
Proposition~\ref{faran type}($ii$) implies that the linear span $H_1$ of the image of $F_1$ is of dimension at most 
$$
\dim(\mathbb P^{r',s'})-d-1+2= \dim(\mathbb P^{r',s'})-d+1\leq \dim(\mathbb P^{r',s'})-1.
$$
Therefore, we have shown that if $F_1$ is not linear, then the image of $F_1$ lies in a hyperplane of $\mathbb P^{r',s'}$. 

If we write $H_1\cong\mathbb P^{r_1',s_1',t_1'}$, then we have $r_1'+s_1'\leq r'+s'\leq 2\dim(\mathbb P^{r,s})-1$. So the hypotheses of the theorem still hold for the local orthogonal map $F_1$ from $\mathbb P^{r,s}$ to $H_1$ and thus by similarly considering the standard projection $\pi_2:H_1\dasharrow K_1$ for some $(r_1',s_1')$-subspace $K_1\subset H_1$ and $F_2:=\pi_2\circ F_1$, etc., we deduce inductively there exists some $(r'',s'')$-subspace $\Phi\subset\mathbb P^{r',s',t'}$ with $F_\Phi:=\pi_\Phi\circ F$, where $\pi_\Phi:\mathbb P^{r',s',t'}\dasharrow\Phi$ is the associated projection, such that we have two possibilities: 

$(i)$  $F_\Phi$ is orthogonal and linear; or

$(ii)$ $F_\Phi$ is orthogonal and there is a line $L\subset\Phi$ containing the image of $F_\Phi$.

(We note here that if $\dim(\mathbb P^{r,s})=2$ (which implies also $\dim(\mathbb P^{r',s'})\leq 2$), by replacing a general line $L$ in the argument above by a general point, we see immediately by orthogonality that $F_1$ maps lines to lines and so we still end up with the two possibilities above (with $F_\Phi$ being $F_1$).)

We are going to first show that case $(ii)$ will lead to a contradiction. If $L\cong\mathbb P^{1,0,1}$ or $\mathbb P^{0,1,1}$, then $F_\Phi$ must be constant (and hence null) since there is only one null point in $L$. However, $F$ is assumed to be not null and $F_\Phi$ is obtained from $F$ by composing a number of projections which are standard each time, thus $F_\Phi$ cannot be null neither. So we can only have $L\cong\mathbb P^{1,1}$. Consequently, $F_\Phi$ can be regarded as a (non-null) local orthogonal map from $\mathbb P^{r,s}$ to $\mathbb P^{1,1}$ and by Theorem~\ref{less}, we get that $\min\{r,s\}=1$. If $\max\{r,s\}\geq 2$, for any point $p\in\mathbb P^{r,s}$ at which $F_\Phi$ is defined, we have $\dim(p^\perp)=\max\{r,s\}-1\geq 1$. However, we also have $\dim(F_\Phi(p)^\perp)=0$ since $F_\Phi(p)\in\mathbb P^{1,1}$, so by orthogonality $F_\Phi$ maps lines to points. Therefore, $F_\Phi$ is constant and hence null, contradicting our assumption at the beginning that $F$ is not null. Therefore, we must have $\max\{r,s\}=1$ and hence $r=s=1$. This is again a contradiction since $\dim(\mathbb P^{r,s})\geq 2$.

For case $(i)$, if the image of $F$ is contained in $\Phi$, then $F$ is just $F_\Phi$ and is linear. Otherwise, consider the map $F_{\Phi^\perp}:=\pi_{\Phi^\perp}\circ F$, where $\pi_{\Phi^\perp}:\mathbb P^{r',s',t'}\dasharrow\Phi^\perp$ is the canonical projection. Then, $F_{\Phi^\perp}$ is a local orthogonal map from $\mathbb P^{r,s}$ to $\Phi^\perp\cong\mathbb P^{r'-r'',s'-s'',t'}$ by Proposition~\ref{projection 2}. If $F_{\Phi^\perp}$ is null, then $F$ is quasi-linear. If not, we can repeat the entire argument above on $F_{\Phi^\perp}$ and use induction to conclude that $F$ is quasi-linear.

Finally, if $F$ maps some positive (resp. negative) point to a positive (resp. negative) point, then the linear part of $F$ is standard by Lemma~\ref{linear} and so $F$ is quasi-standard.
\end{proof}

\begin{theorem}\label{main}
Let $F$ be a local orthogonal map from $\mathbb P^{r,s}$ to $\mathbb P^{r',s',t'}$. If $$\min\{r',s'\}\leq 2\min\{r,s\}-2,$$ then $F$ is either null or quasi-linear. If in addition $F$ preserves the sign of any single positive or negative point, then $F$ is quasi-standard.
\end{theorem}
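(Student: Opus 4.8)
The plan is to follow the same two reductions used in the neighbouring theorems and then run an induction on the target dimension, the novelty being that the basic plane-mapping input comes from Proposition~\ref{boundary} at the level of $(\min\{r,s\}-1)$-planes rather than from lines. First I would dispose of the codomain nullity $t'>0$ exactly as in Theorems~\ref{same} and~\ref{double dim thm}: composing with the standard projection $\pi:\mathbb P^{r',s',t'}\dashrightarrow\mathbb P^{r',s'}$ (Propositions~\ref{projection} and~\ref{projection 2}) produces a local orthogonal map to $\mathbb P^{r',s'}$, so once the statement is proved for $t'=0$ the general case follows from Definition~\ref{quasi def}. Writing $m=\min\{r,s\}$ and $m'=\min\{r',s'\}$, I would then split on the size of $m'$. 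If $m'<m$ (which in particular covers $m=1$, since then the hypothesis forces $m'=0$), Theorem~\ref{less} immediately gives that $F$ is null. Hence the real work is the range $m\le m'\le 2m-2$, which itself forces $m\ge 2$.

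In this range the engine is the pair Proposition~\ref{boundary} and Proposition~\ref{faran type}$(ii)$. By Proposition~\ref{boundary} (with $t=0$), $F$ maps $(m-1)$-planes to $(m'-1)$-planes, and because $m-1\le m'-1\le 2(m-1)-1$ the hypothesis of Proposition~\ref{faran type}$(ii)$ is met, so $F$ in fact maps $(m-1+k)$-planes to $(m'-1+k)$-planes for every $k\ge 0$. The complementary, downward device is orthogonality itself: if a plane $P$ has image spanning $S_P$, then $P^\perp$ maps into $S_P^\perp$, so controlling the span of the image of a plane simultaneously controls the span of the image of its orthogonal complement. Pushing the plane-mapping property up to the whole of $\mathbb P^{r,s}$ shows that the total image spans a subspace $\Phi$ of dimension at most $(r+s-1)+(m'-m)$, and comparing this with $\dim\mathbb P^{r',s'}=m'-1+\max\{r',s'\}$ shows that $\Phi$ is a \emph{proper} subspace whenever $\max\{r',s'\}>\max\{r,s\}$.

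This is what drives the induction. Assuming $F$ is not null, I would view $F$ as a map into $\Phi$ and repeat the analysis: restricting the target to the span preserves both orthogonality and non-nullity (a null space of $\Phi$ is a null space of the ambient space, as in Theorem~\ref{double dim thm}), while the target dimension $r'+s'$ strictly decreases. The process must terminate, and at termination the image fills its ambient space $\Phi\cong\mathbb P^{a,b,c}$; there Proposition~\ref{boundary} together with the maximal-null-space dimension count of Theorem~\ref{same} and Proposition~\ref{faran type}$(i)$ forces either the span to be null (so $F$ is null, excluded) or $F$ to be linear, the remaining degenerate low-dimensional configurations collapsing through Theorem~\ref{less} exactly as in the case analysis of Theorem~\ref{double dim thm}. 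Reinstating the orthogonal complement of $\Phi$ that was discarded exhibits $F$ as the direct sum of a linear piece and a null piece, i.e. quasi-linear. Finally, if $F$ preserves the sign of a single positive or negative point then $F$ is not null, the linear piece is genuine, and Lemma~\ref{linear} upgrades it to a linear isometry (with $r\le r'$, $s\le s'$), giving quasi-standard.

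The hard part will be making this induction rigorous rather than schematic. Proposition~\ref{faran type}$(ii)$ only yields \emph{containment} of image spans, so I must argue that for a \emph{general} plane the span attains the maximal dimension $m'-1+k$ — and otherwise pass to the smaller subspace it genuinely fills — before the orthogonality reflection produces a clean plane-to-plane statement. Equally delicate is verifying that the governing inequality (equivalently the range condition needed for Proposition~\ref{faran type}$(ii)$) survives each restriction of the target: one must track the signature $(a,b,c)$ of the intermediate span $\Phi\cong\mathbb P^{a,b,c}$, which may become \emph{degenerate} with $c>0$ even though the original target had $t'=0$, and use Proposition~\ref{boundary} for the reduced target to keep $\min\{a,b\}+c-1$ under control. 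Confirming that the induction on $r'+s'$ actually halts in one of the two admissible end states, rather than stalling in an intermediate degenerate configuration, is where the bulk of the bookkeeping lies.
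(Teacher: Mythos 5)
The first half of your argument --- reducing to $t'=0$ via the standard projection, and then combining Proposition~\ref{boundary} with Proposition~\ref{faran type}$(ii)$ to confine the image of $F$ to a subspace $\Xi$ of dimension at most $\min\{r',s'\}-1+\max\{r,s\}$ --- is exactly the paper's argument. From that point on, however, you take a different and incomplete route. The paper simply writes $\Xi\cong\mathbb P^{a_1,b_1,c_1}$ and observes the numerical fact
$$
a_1+b_1\leq\dim(\Xi)+1\leq\min\{r',s'\}+\max\{r,s\}\leq 2\min\{r,s\}-2+\max\{r,s\}<2(r+s)-3=2\dim(\mathbb P^{r,s})-1,
$$
so that $F$, regarded as a local orthogonal map into $\Xi$, falls under the hypothesis of Theorem~\ref{double dim thm}, which is already proved (including for degenerate targets with $c_1>0$) and immediately yields the null/quasi-linear dichotomy; Lemma~\ref{linear} then upgrades to quasi-standard as you say. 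No further induction is needed.

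You instead propose to iterate the confinement step yourself, continuing to feed each successive span back into Proposition~\ref{boundary} and Proposition~\ref{faran type}$(ii)$. This is where the genuine gap lies, and you half-identify it: once an intermediate span $\Phi\cong\mathbb P^{a,b,c}$ acquires positive nullity $c>0$, Proposition~\ref{boundary} only gives that $(\min\{r,s\}-1)$-planes map to $(\min\{a,b\}+c-1)$-planes, and the inequality $\min\{a,b\}+c-1\leq 2(\min\{r,s\}-1)-1$ needed to re-apply Proposition~\ref{faran type}$(ii)$ is not guaranteed by the original hypothesis; likewise your ``orthogonality reflection'' between a plane and its orthogonal complement degenerates in such a target. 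You flag these points as bookkeeping to be confirmed, but they are precisely the obstructions your induction must overcome, and the proposal does not overcome them. (The paper does overcome them, but inside the proof of Theorem~\ref{double dim thm}, by a different mechanism based on orthogonal complements of \emph{lines} rather than on Proposition~\ref{boundary}.) The repair is simply to notice that the single application of Proposition~\ref{faran type}$(ii)$ already places the restricted map within the scope of Theorem~\ref{double dim thm} and to quote that theorem.
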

\begin{proof}
The theorem is trivial if $\min\{r,s\}\leq 1$ since it would imply $\min\{r',s'\}=0$. Suppose $\min\{r,s\}\geq 2$. As before, we just need to prove the case $t'=0$. By Proposition \ref{boundary}, $F$ maps ${(\min\{r,s\}-1)}$-planes to $(\min\{r',s'\}-1)$-planes.
By hypotheses $\min\{r',s'\}-1\le 2(\min\{r,s\}-1)-1$, and since $\dim(\mathbb P^{r,s})=\min\{r,s\}-1+\max\{r,s\}$, we deduce from Proposition~\ref{faran type}$(ii)$ that  the image of $F$ is contained in a linear subspace $\Xi\subset\mathbb P^{r',s'}$ such that $\dim(\Xi)\leq \min\{r',s'\}-1+\max\{r,s\}$. If we write $\Xi\cong\mathbb P^{a_1,b_1,c_1}$ for some non-negative integers $a_1,b_1,c_1$, then we can regard $F$ as a local orthogonal map from $\mathbb P^{r,s}$ to $\mathbb P^{a_1,b_1,c_1}$. Note that
$$
a_1+b_1\leq \dim(\Xi)+1\leq\min\{r',s'\}+\max\{r,s\}\leq 2\min\{r,s\}-2+\max\{r,s\}.
$$ 
Thus, $a_1+b_1< 2(r+s)-3=2\dim(\mathbb P^{r,s})-1$ and now the desired results follows directly from Theorem~\ref{double dim thm}. 
\end{proof}

\begin{theorem}\label{ball}
Let $F$ be a local orthogonal map from $\mathbb P^{1,s}$ to $\mathbb P^{1,s',t'}$. If $s'\leq 2s-2$, then $F$ either null or quasi-standard. If in addition if $t'=0$, then $F$ is either constant or standard.
\end{theorem}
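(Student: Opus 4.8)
The plan is to read this off Theorem~\ref{double dim thm} and then sharpen its conclusion using the special feature $r=r'=1$. First I would dispose of the trivial case $s=0$ (then $\mathbb P^{1,0}$ is a single point and every map is constant), so assume $s\geq 1$. Writing $d=\dim(\mathbb P^{1,s})=s$, the hypothesis $s'\leq 2s-2$ is exactly the statement $r'+s'=1+s'\leq 2d-1$. Hence Theorem~\ref{double dim thm} applies verbatim and already gives that $F$ is either null or quasi-linear. The whole remaining task is therefore twofold: (a) to promote ``quasi-linear'' to ``quasi-standard'' \emph{without} assuming any sign-preservation, and (b) under the extra hypothesis $t'=0$, to sharpen ``null or quasi-standard'' to ``constant or standard''.

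For (a), suppose $F$ is quasi-linear and not null, with linear part $G:\mathbb C^{1,s}\to A\subset\mathbb C^{1,s',t'}$. By Lemma~\ref{linear} there is $\lambda\in\mathbb R$ with $\langle G(u),G(v)\rangle_{1,s',t'}=\lambda\langle u,v\rangle_{1,s}$ for all $u,v$, and ``standard'' is precisely the case $\lambda>0$. The crucial observation is that $\lambda<0$ cannot occur because $r'=1$. Indeed, if $\lambda<0$ then $G$ is injective (the domain form is non-degenerate), it carries a positive vector to a negative one and each of the $s$ independent negative directions to a positive one, so the image $G(\mathbb C^{1,s})$ is a non-degenerate subspace of signature $(s;1;0)$. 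In particular it contains $s$ mutually orthogonal positive directions, which is impossible inside $\mathbb C^{1,s',t'}$ unless $s\leq r'=1$; but $s\leq 1$ means $s=1$ and hence $s'\leq 2s-2=0$, contradicting the fact that the one negative direction of $G$'s image needs $s'\geq 1$. Thus $\lambda\geq 0$, and since $F$ is not null we have $\lambda\neq 0$ (otherwise $G$ would map everything into null vectors and $F$ would be null), so $\lambda>0$ and the linear part is standard. Hence $F$ is quasi-standard. I expect this sign-of-$\lambda$ argument to be the only real content of the proof.

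For (b), assume $t'=0$. If $F$ is null its image lies in a null space of the non-degenerate space $\mathbb P^{1,s'}$; the maximal null spaces there have dimension $\min\{1,s'\}-1=0$ (and the target is a single point when $s'=0$), so the image is a point and $F$ is constant. If instead $F$ is quasi-standard, I would show the null part cannot actually be present. Since $t'=0$ the ambient form is non-degenerate, so in any orthogonal decomposition $\mathbb C^{1,s'}=A\oplus B$ both summands are non-degenerate; moreover, by (a) the standard part has image of signature $(1;s;0)$, which already exhausts the unique positive direction of $\mathbb C^{1,s'}$. Consequently $A$ has signature $(1;b;0)$ and $B=A^\perp$ is negative definite, so $\mathbb PB$ contains no null points at all and $\pi_B\circ F$ cannot be (nontrivially) null. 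Therefore the decomposition is trivial, $F$ coincides with its standard part, and $F$ is standard. Combining the two cases yields ``constant or standard'', completing the argument.
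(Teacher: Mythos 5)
Your proof is correct, but the route you take to upgrade ``quasi-linear'' to ``quasi-standard'' is genuinely different from the paper's. The paper exploits the geometry of the domain $\mathbb P^{1,s}$: for a null point $x$, the hyperplane $x^\perp$ is semi-negative with $x$ as its only null point, and likewise for $F(x)^\perp$ in the target, so a non-constant $F$ must send some negative point of $x^\perp$ to a negative point; the sign-preserving clause of Theorem~\ref{double dim thm} then gives quasi-standard at once. You instead take only the ``null or quasi-linear'' conclusion and rule out $\lambda<0$ and $\lambda=0$ in Lemma~\ref{linear} by counting mutually orthogonal positive directions in $\mathbb C^{1,s',t'}$ --- a purely algebraic signature argument that never invokes sign-preservation. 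Both work; the paper's version is shorter because it reuses the already-proved sign-preserving case of Theorem~\ref{double dim thm}, while yours isolates exactly why $r'=1$ forces $\lambda>0$ and is arguably more self-contained. One point you should make explicit: to apply Lemma~\ref{linear} to the linear part $G$ you need $G$ to map null vectors to null vectors, i.e.\ the linear part must itself be orthogonal; this is not part of Definition~\ref{quasi def}, but it is guaranteed by the construction in the proof of Theorem~\ref{double dim thm} (case $(i)$ there produces an orthogonal linear $F_\Phi$), or can be checked directly since $A\perp B$ and the nullity of $\pi_B\circ F$ give $\langle F(p),F(q)\rangle_{r',s',t'}=\langle \pi_A F(p),\pi_A F(q)\rangle_{r',s',t'}$. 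Your final step for $t'=0$ (the orthogonal complement of a $(1,s)$-subspace of $\mathbb P^{1,s'}$ is negative definite, hence contains no null points, so the null summand must be absent) coincides with the paper's.
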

\begin{proof}
As usual, we just need to prove the theorem for $t'=0$. Let $F:U\subset\mathbb P^{1,s}\rightarrow \mathbb P^{1,s'}$ be a non-constant orthogonal map and $s'\leq 2s-2$. 
Take a null point $x\in U$. Since we are in $\mathbb P^{1,s}$, it follows that $x^\perp$ is a semi-negative hyperplane and $x$ is the only null point in $x^\perp$.  By orthogonality, $F(x^\perp\cap U)\subset (F(x))^\perp$ and $F(x)$ is also the only null point in the semi-negative hyerplane $F(x)^\perp\subset\mathbb P^{1,s'}$. As $F$ is non-constant, we can always choose $x$ such that $F$ is not constant on $x^\perp\cap U$ and hence $F$ preserves the sign of some negative point on $x^\perp$.

Now since $1+s'\leq 2\dim(\mathbb P^{1,s})-1$, from Theorem~\ref{double dim thm} we have that $F$ is quasi-standard. Finally, as the orthogonal complement of a $(1,s)$-subspace in $\mathbb P^{1,s'}$ is a $(0,s'-s)$-subspace which does not contain any null point, we see from the definition of quasi-standard maps that $F$ is actually standard.
\end{proof}



\section{Proper maps between generalized balls}\label{proper maps}

On $\mathbb P^{r,s,t}$, the set of positive points $\mathbb B^{r,s,t}\subset\mathbb P^{r,s,t}$ (or $\mathbb B^{r,s}\subset\mathbb P^{r,s}$) has been called a \textit{generalized ball} in the literature since $\mathbb B^{1,s}$ is just the ordinary $s$-dimensional complex unit ball $\mathbb B^s$ embedded in $\mathbb P^s$. In additon, the boundary $\partial \mathbb B^{r,s,t}$ of $\mathbb B^{r,s,t}$ is simply the set of null points on $\mathbb P^{r,s,t}$. 

A local holomorphic map $f:U\subset\mathbb P^{r,s,t}\rightarrow\mathbb P^{r',s',t'}$, defined on a connected open set $U$ such that $U\cap\partial \mathbb B^{r,s,t}\neq\varnothing$, is called a \textit{local proper holomorphic map} from $\mathbb B^{r,s,t}$ to $\mathbb B^{r',s',t'}$ if $f(U\cap \mathbb B^{r,s,t})\subset \mathbb B^{r',s',t'}$ and $f(U\cap\partial \mathbb B^{r,s,t})\subset\partial \mathbb B^{r',s',t'}$.

The following statement is a direct consequence of Proposition~\ref{equiv1}.

\begin{proposition}\label{equiv2}
By shrinking the domain of definition if necessary, a local proper holomorphic map from $\mathbb B^{r,s,t}$ to $\mathbb B^{r',s',t'}$ is a local orthogonal map from $\mathbb P^{r,s,t}$ to $\mathbb P^{r',s',t'}$.
\end{proposition}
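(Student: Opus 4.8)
The plan is to verify that a local proper holomorphic map satisfies the hypotheses of Proposition~\ref{equiv1} and then invoke it directly. The starting point is the observation recorded just before the statement: the boundary $\partial\mathbb B^{r,s,t}$ coincides with the set of null points of $\mathbb P^{r,s,t}$, and likewise $\partial\mathbb B^{r',s',t'}$ is the set of null points of $\mathbb P^{r',s',t'}$. Hence the defining condition $f(U\cap\partial\mathbb B^{r,s,t})\subset\partial\mathbb B^{r',s',t'}$ of a local proper map is \emph{verbatim} the assertion that $f$ sends every null point in its domain to a null point. Moreover, the requirement $U\cap\partial\mathbb B^{r,s,t}\neq\varnothing$ built into the definition of a local proper map says precisely that $U$ is an open set containing a null point. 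These are exactly the two standing assumptions of Proposition~\ref{equiv1}.

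With these facts in hand I would apply Proposition~\ref{equiv1} to obtain an open subset $V\subset U$, still containing a null point, on which $f$ is orthogonal. This $V$ is the ``shrunken'' domain referred to in the statement, and because it meets $\partial\mathbb B^{r,s,t}$ the restriction $f|_V$ genuinely qualifies as a local orthogonal map in the sense of Definition~\ref{orthogonal def} --- in particular the orthogonality condition is non-vacuous, as demanded in the remark following that definition. Here I rely on the fact that in the generalized-ball setting both positive and negative directions are present, i.e. $r,s>0$, which is precisely the hypothesis of Proposition~\ref{equiv1} and the condition ensuring that ordinary null points are available for the polarization argument used there.

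I do not expect any genuine obstacle, since all the substance is already contained in Proposition~\ref{equiv1}; the points requiring care are purely bookkeeping. The first is the translation of the boundary condition of properness into the null-point language of Proposition~\ref{equiv1}, and the second is checking that the open set $V$ furnished by that proposition still intersects $\partial\mathbb B^{r,s,t}$, so that $f|_V$ is a \emph{bona fide} local orthogonal map rather than one whose orthogonality condition is empty. I would also remark in passing that the remaining properness condition $f(U\cap\mathbb B^{r,s,t})\subset\mathbb B^{r',s',t'}$, sending positive points to positive points, plays no role in establishing orthogonality; it does, however, additionally render $f$ sign-preserving on positive points, which is exactly the extra input needed to upgrade ``quasi-linear'' to ``quasi-standard'' in the conclusions of Theorem~\ref{intro thm 1}.
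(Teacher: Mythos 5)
Your proposal is correct and follows exactly the paper's route: the paper presents this proposition as a direct consequence of Proposition~\ref{equiv1}, since the properness condition on the boundary is precisely the statement that null points are mapped to null points. Your additional bookkeeping (checking that the shrunken set $V$ still meets $\partial\mathbb B^{r,s,t}$, and noting the role of the interior condition for sign-preservation) is consistent with, and slightly more explicit than, what the paper records.
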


Consequently, our results for local orthogonal maps in the previous sections also hold for local proper holomorphic maps among generalized balls, from which we can obtain and generalize many well-known results in the literature.

\begin{theorem}
When $r, s\geq 2$, every local proper holomorphic map from $\mathbb B^{r,s,t}$ to $\mathbb B^{r,s',t'}$ is quasi-standard (standard for $t'=0$).
\end{theorem}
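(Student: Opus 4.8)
The plan is to recognize this as a direct instance of Theorem~\ref{same2}, where the only genuine work is bridging the gap between properness and the sign-preserving hypothesis. First I would invoke Proposition~\ref{equiv2} to upgrade the given local proper map $f\colon\mathbb B^{r,s,t}\to\mathbb B^{r,s',t'}$ to a local orthogonal map $f\colon\mathbb P^{r,s,t}\to\mathbb P^{r,s',t'}$ after shrinking its domain. Properness then supplies two extra facts for free: $f$ is non-null, since it sends interior points of $\mathbb B^{r,s,t}$ to interior points of $\mathbb B^{r,s',t'}$ rather than into the boundary null locus, and $f$ maps every positive point to a positive point, i.e. it preserves the sign of positive points. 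Because the target has the same number $r$ of positive directions as the source and $r,s\ge 2$, these are exactly the hypotheses packaged in condition $(iv)$ of Theorem~\ref{intro thm 1}. As usual I would reduce to the case $t'=0$ by composing with the standard projection $\pi\colon\mathbb P^{r,s',t'}\dashrightarrow\mathbb P^{r,s'}$, which is orthogonal and sign-preserving by Proposition~\ref{projection}; $f$ is non-null so its image avoids the indeterminacy locus, and the general case is recovered from the $t'=0$ case through the orthogonal decomposition $\mathbb C^{r,s',t'}=\mathbb C^{r,s'}\oplus\mathbb C^{0,0,t'}$ together with Definition~\ref{quasi def} and Proposition~\ref{projection 2}.

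With $t'=0$ I would split into the two cases $r\le s$ and $r>s$, exactly as in Theorem~\ref{same2}. When $r\le s$ one has $\min\{r,s'\}\le r=\min\{r,s\}$, so Theorem~\ref{same} applies; as $f$ is non-null and preserves the sign of a positive point, it must be standard. When $r>s$ I would follow the orthogonal-complement procedure of Proposition~\ref{less2}: choose $r-s$ mutually orthogonal positive points $p_1,\dots,p_{r-s}\in U$. Their images are positive (sign preservation) and mutually orthogonal (orthogonality of $f$), hence linearly independent and spanning a positive subspace; restricting $f$ to $H^{s,s,t}=\{p_1,\dots,p_{r-s}\}^{\perp}$ then yields a non-null orthogonal map into $\{f(p_1),\dots,f(p_{r-s})\}^{\perp}=H^{s,s'}$, where the signature $(s,s')$ uses crucially that $r=r'$. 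Since $s\ge 2$, Theorem~\ref{same} shows this restriction is standard, so it carries positive $(s-1)$-planes to positive $(s-1)$-planes; the same holds for all nearby $(s,s,t)$-subspaces, and since positive $(s-1)$-planes form an open set in the relevant Grassmannian, this forces $f$ to map $(s-1)$-planes to $(s-1)$-planes. Proposition~\ref{faran type}$(i)$ then makes $f$ linear, its image being unable to lie in an $(s-1)$-plane since the restriction is already standard, and Lemma~\ref{linear} together with sign preservation of positive points upgrades linear to standard.

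The one point needing care, and the only place where this is more than a literal citation of Theorem~\ref{same2}, is that properness supplies only the preservation of the sign of \emph{positive} points, not the full sign-preserving property (Proposition~\ref{equiv2} asserts merely orthogonality). So the main thing I would verify is that at every stage of the argument of Theorem~\ref{same2} only positive-to-positive is actually used: it is invoked to guarantee that the images $f(p_i)$ are positive, so that the orthogonal complements carry the correct signatures, and to trigger the ``quasi-standard'' clause of Theorem~\ref{same}, which requires only the preservation of the sign of a single positive point. Confirming that the orthogonal complement of the span of $f(p_1),\dots,f(p_{r-s})$ has signature exactly $(s,s')$, for which the equality $r=r'$ is indispensable, is the key bookkeeping step; once this is in place the conclusion follows as before.
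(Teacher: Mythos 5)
Your proposal is correct and follows essentially the same route as the paper: the paper's proof is precisely the observation that the statement follows from Theorem~\ref{same2} once one notes that its argument only ever uses the fact that positive points are mapped to positive points, which properness supplies. Your expanded verification of that remark (tracking where positive-to-positive enters the orthogonal-complement construction and the signature bookkeeping with $r=r'$) is accurate and matches the paper's intent.
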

\begin{proof}[Proof and remarks.]
The statement follows from Theorem~\ref{same2}. Here we note that although Theorem~\ref{same2} is for local sign-preserving maps, its proof actually only assumes that positive points are mapped to positive points. 

The case for $r\leq s$ and $t=t'=0$ has been obtained by Baouendi-Huang~\cite{BH} (Theorem 1.2 therein) while the case for $r\leq s$ has been obtained by Ng-Zhu~\cite{NZ} (Theorem 1.2 therein).
\end{proof}

\begin{theorem}\label{proper double dim}
When $r'+s'\leq 2(r+s)-3$, every local proper holomorphic map from $\mathbb B^{r,s}$ to $\mathbb B^{r',s',t'}$ is quasi-standard (standard for $t'=0$). In particular, every local proper holomorphic map from $\mathbb B^s$ to $\mathbb B^{s'}$ is standard if $s'\leq 2s-2$.
\end{theorem}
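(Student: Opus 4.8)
The plan is to deduce Theorem~\ref{proper double dim} directly from the rigidity results already established for local orthogonal maps, using Proposition~\ref{equiv2} as the bridge. First I would observe that by Proposition~\ref{equiv2}, after shrinking the domain of definition, any local proper holomorphic map $f$ from $\mathbb B^{r,s}$ to $\mathbb B^{r',s',t'}$ is a local orthogonal map from $\mathbb P^{r,s}$ to $\mathbb P^{r',s',t'}$. The hypothesis $r'+s'\leq 2(r+s)-3$ is precisely $r'+s'\leq 2\dim(\mathbb P^{r,s})-1$, since $\dim(\mathbb P^{r,s})=r+s-1$, so condition~$(iii)$ of Theorem~\ref{intro thm 1}, namely the hypothesis of Theorem~\ref{double dim thm}, is satisfied.

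Next I would invoke Theorem~\ref{double dim thm} to conclude that $f$ is either null or quasi-linear, and moreover quasi-standard provided $f$ preserves the sign of some single positive or negative point. Here the key point is that the properness condition $f(U\cap\mathbb B^{r,s})\subset\mathbb B^{r',s',t'}$ forces $f$ to map at least one positive point to a positive point, so $f$ cannot be null (a null map sends the whole open set into a null space, whose points are neither positive nor negative) and the sign-preserving hypothesis of the second half of Theorem~\ref{double dim thm} is met for the positive points. Therefore $f$ is quasi-standard, and standard when $t'=0$, giving the first assertion.

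For the final statement about maps from $\mathbb B^s$ to $\mathbb B^{s'}$, I would specialize to $r=r'=1$ and $t=t'=0$. Then $s'\leq 2s-2$ is equivalent to $1+s'\leq 2(1+s)-3=2s-1$, which is exactly the hypothesis $r'+s'\leq 2(r+s)-3$ with $r=r'=1$, so by the first part $f$ is standard (since $t'=0$). Alternatively and more directly, this case is already covered by Theorem~\ref{ball}: a non-constant local orthogonal map from $\mathbb P^{1,s}$ to $\mathbb P^{1,s'}$ with $s'\leq 2s-2$ is standard, and properness again rules out the constant (null) alternative.

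I do not anticipate any genuine obstacle in this proof, as it is a packaging of earlier results; the only point requiring a moment's care is verifying that properness excludes the null case and supplies the sign-preservation needed to upgrade quasi-linear to quasi-standard. Concretely, since $U\cap\mathbb B^{r,s}\neq\varnothing$ (because $U$ meets $\partial\mathbb B^{r,s}$ and is open) and $f$ maps this set into $\mathbb B^{r',s',t'}$, the image contains positive points, so $f$ is not null and preserves the sign of a positive point; this is the single subtlety that makes the clean conclusion go through.
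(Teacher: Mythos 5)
Your proposal is correct and takes essentially the same route as the paper, whose entire proof of this theorem consists of the remark that it follows from Theorem~\ref{double dim thm}; you merely make explicit the routine details the paper leaves implicit (Proposition~\ref{equiv2}, the identity $2\dim(\mathbb P^{r,s})-1=2(r+s)-3$, and the observation that properness forces some positive point to map to a positive point, which both rules out the null alternative and supplies the sign-preservation needed to upgrade quasi-linear to quasi-standard). The only point that neither you nor the paper justifies separately is the parenthetical ``standard for $t'=0$'', which does not follow verbatim from Theorem~\ref{double dim thm} (whose conclusion is only ``quasi-standard'' even when $t'=0$, since the null summand $B$ may be a nondegenerate subspace of $\mathbb C^{r',s'}$); for the unit-ball case you correctly note that Theorem~\ref{ball} closes this gap, because there the complementary summand is definite and admits no null points.
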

\begin{proof}[Proof and remarks.] 
It follows from Theorem~\ref{double dim thm}. The special case for $r+1=r'=2$ and $t'=0$ has been proven by Xiao-Yuan~\cite{XY} (Theorem 3.2 therein). The special case for the ordinary complex unit balls is a classical theorem by Faran~\cite{faran}.
\end{proof}

\begin{theorem}
When $r\leq s$ and $r'\leq 2r-2$, every local proper holomorphic map from $\mathbb B^{r,s}\rightarrow \mathbb B^{r',s',t'}$ is quasi-standard.
\end{theorem}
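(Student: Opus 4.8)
The plan is to obtain this theorem as a direct specialization of Theorem~\ref{main}, so essentially all of the substantive work has already been carried out there; what remains is to check that the hypotheses of Theorem~\ref{main} are satisfied in the present setting. First I would invoke Proposition~\ref{equiv2} to replace the given local proper holomorphic map $f:U\subset\mathbb P^{r,s}\to\mathbb P^{r',s',t'}$ by a local orthogonal map $F$ from $\mathbb P^{r,s}$ to $\mathbb P^{r',s',t'}$ (shrinking the domain of definition if necessary). Note that the source is $\mathbb P^{r,s}$, i.e. $t=0$, which is exactly the form of the source in Theorem~\ref{main}.

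Next I would verify the numerical inequality. Since $r\leq s$ we have $\min\{r,s\}=r$, and since trivially $\min\{r',s'\}\leq r'$, the hypothesis $r'\leq 2r-2$ yields
$$
\min\{r',s'\}\leq r'\leq 2r-2=2\min\{r,s\}-2,
$$
which is precisely the condition required by Theorem~\ref{main}. Hence $F$ is either null or quasi-linear. To upgrade this to quasi-standard I would use properness: by definition of a local proper holomorphic map we have $f(U\cap\mathbb B^{r,s})\subset\mathbb B^{r',s',t'}$, and since $U$ is open and meets $\partial\mathbb B^{r,s}$ it contains positive points, each of which is sent to a positive point. Thus $F$ preserves the sign of at least one positive point, and the ``in addition'' clause of Theorem~\ref{main} then gives that $F$ is quasi-standard, completing the argument.

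There is no real obstacle here, since the statement is a genuine corollary of Theorem~\ref{main}; the only points requiring care are the bookkeeping of the inequality and the observation that properness supplies the needed sign-preservation. The one subtlety worth flagging is that the stated bound $r'\leq 2r-2$ is phrased in terms of $r'$ rather than $\min\{r',s'\}$, so the reduction to the exact form of Theorem~\ref{main} rests on the elementary inequality $\min\{r',s'\}\leq r'$ together with the hypothesis $r\leq s$, which is what makes $\min\{r,s\}=r$. This theorem is the generalized-ball counterpart of the rigidity result of Baouendi-Ebenfelt-Huang~\cite{BEH} recorded as condition~$(ii)$ in Theorem~\ref{intro thm 1}.
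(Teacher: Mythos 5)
Your proposal is correct and follows exactly the paper's route: the paper simply states that the theorem is a direct consequence of Theorem~\ref{main}, and your argument fills in precisely the intended details (Proposition~\ref{equiv2} to pass to an orthogonal map, the inequality $\min\{r',s'\}\leq r'\leq 2r-2=2\min\{r,s\}-2$, and properness supplying a positive point mapped to a positive point, which rules out nullity and triggers the quasi-standard conclusion).
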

\begin{proof}[Proof and remarks.]
It is a direct consequence of Theorem~\ref{main}. The case $t'=0$ has been obtained by Baouendi-Ebenfelt-Huang~\cite{BEH} (Theorem 1.1 and Corollary 1.6 therein)
\end{proof}

\noindent{\bf Acknowledgements.} The authors would like to thank Prof. Xiaojun Huang, Ming Xiao, Wanke Yin and Yuan Yuan for their comments on the first version of this article. The first author was partially supported by Institute of Marine Equipment, Shanghai Jiao Tong University. The second author was partially supported by Science and Technology Commission of Shanghai Municipality (STCSM) (No. 13dz2260400).

\end{document}